\title{A Geometric Proof of the Colored Tverberg Theorem}
\newtheorem{theorem}{Theorem}
\newtheorem{prop}[theorem]{Proposition}
\newtheorem{lemma}[theorem]{Lemma}
\theoremstyle{remark}
\newcommand{\R}{\mathbb{R}}
\newcommand{\RR}{\mathcal{R}}
\newcommand\bR{{\bf R}}
\newcommand{\C}{\mathcal{C}}
\newcommand{\PP}{\mathcal{P}}
\newcommand{\heading}[1]{\medskip\par\noindent{\bf #1}}
\DeclareMathOperator{\conv}{conv}
\DeclareMathOperator{\aff}{aff}
\DeclareMathOperator{\linaff}{linaff}
\DeclareMathOperator{\sgn}{sgn}
\DeclareMathOperator{\csgn}{csgn}
\DeclareMathOperator{\gsgn}{gsgn}
\def\:{\colon}
\def\eps{\varepsilon}
\long\def\onefigure#1#2{
\begin{figure*}[tbp]
\begin{center}
#1
\end{center}
\caption{#2}
\end{figure*}
}
\newcommand{\cmt}[1]{\ifhmode\newline\fi{\sf *** \ \ #1 \\}}
\newcommand{\labpdffig}[2]  
{\onefigure{\mbox{\includegraphics{#1}}}{\label{f:#1} #2} }
\newcommand{\labepsfigw}[3]  
{\onefigure{\mbox{\epsfig{file=#1.eps,width=#2}}}{\label{f:#1} #3} }
\newcommand{\ProofEndBox}{{\ifhmode\unskip\nobreak\hfil\penalty50 \else
          \leavevmode\fi\quad\vadjust{}\nobreak\hfill$\Box$
            \finalhyphendemerits=0 \par}}
\def\kamsymb{{\rm a}}
\def\itisymb{{\rm b}}
\def\ethsymb{{\rm c}}
\def\snfsymb{{\rm e}}
\def\gauksymb{{\rm d}}
\author{
{\sc Ji\v{r}\'{\i} Matou\v{s}ek}$^{\kamsymb,\itisymb, \ethsymb}$
\and
{\sc Martin Tancer}$^{\kamsymb,\itisymb,\gauksymb}$
\and
{\sc Uli Wagner}$^{\ethsymb,\, \snfsymb}$
}
\begin{document}

\maketitle

{\renewcommand\thefootnote{\kamsymb}
\footnotetext{Department of Applied Mathematics,
Charles University, Malostransk\'{e} n\'{a}m.~25,
118~00~~Praha~1,  Czech Republic}
}
{\renewcommand\thefootnote{\itisymb}
\footnotetext{Institute of Theoretical Computer Science (ITI),
Charles University, Malostransk\'{e} n\'{a}m.~25,
118~00~~Praha~1,  Czech Republic}
}
{\renewcommand\thefootnote{\ethsymb}
\footnotetext{Institute of  Theoretical Computer Science,
ETH Zurich, 8092~Zurich, Switzerland}
}
{\renewcommand\thefootnote{\gauksymb}
\footnotetext{Supported by the grants SVV-2010-261313
(Discrete Methods and Algorithms) and GAUK 49209.}
}
{\renewcommand\thefootnote{\snfsymb}
\footnotetext{Research supported by the Swiss National Science Foundation (SNF Projects 200021-125309 and 200020-125027.}
}

\begin{abstract} 
The colored Tverberg theorem asserts that for every
$d$ and $r$ there exists $t=t(d,r)$ such that for every
set $C\subset \R^d$ of cardinality $(d+1)t$, partitioned
into $t$-point subsets $C_1,C_2,\ldots,C_{d+1}$ (which we
think of as color classes; e.g., the points of $C_1$ are
red, the points of $C_2$ blue, etc.),
there exist $r$ disjoint sets $R_1,R_2,\ldots,R_r\subseteq C$
that are \emph{rainbow}, meaning that $|R_i\cap C_j|\le 1$
for every $i,j$, and whose convex hulls all have a common point.

All known proofs of this theorem are topological.
We present a geometric version of a recent beautiful proof
by Blagojevi\'c, Matschke, and Ziegler, avoiding a direct use
of topological methods. The purpose of this de-topologization
is to make the proof more concrete and intuitive,
and accessible to a wider audience. 
\end{abstract}

AMS Subject Classification: 52A35

\section{Introduction}

We first recall three fundamental
results of discrete geometry, all of them dealing
with partitioning finite sets in $\R^d$
so that the convex hulls of the parts intersect.
In the order of increasing sophistication, they are \emph{Radon's lemma},
\emph{Tverberg's theorem}, and the \emph{colored Tverberg theorem}.
We refer to \cite{Mat-dg} for more background, applications,
and historical references not mentioned here.

\heading{Radon's theorem }
 asserts that every set $C\subset\R^d$ of $d+2$ points
has two disjoint subsets $A_1,A_2$ with $\conv(A_1)\cap\conv(A_2)\ne\emptyset$;
see the illustration of the planar case in Fig.~\ref{f:radtve}.
The proof is simple linear algebra. 

\heading{Tverberg's theorem } states that every set $C\subset\R^d$ of
$(d+1)(r-1)+1$ points has $r$ pairwise disjoint subsets
$A_1,\ldots,A_r$ with $\bigcap_{i=1}^r\conv(A_i)\ne\emptyset$
(so Radon's lemma is the $r=2$ case). Several geometric proofs
are known, e.g., \cite{Tverberg,Tverberg2,sarkaria92}.
The number $(d+1)(r-1)+1$ is easily shown to be the smallest possible
for such a claim to hold,
e.g., by considering the configuration $C$ of $(d+1)(r-1)$ points
forming $d+1$ small clusters by $r-1$ points each, as in 
Fig.~\ref{f:clusters}. 

It is easy to show (e.g.,  by iterating  Radon's lemma) 
that there exists \emph{some}
number $T=T(d,r)$ such that the conclusion of the theorem
holds for every set $C$ with at least $T$ points.
The hard part of Tverberg's
theorem is obtaining the optimal value of~$T(d,r)$.

\heading{The colored Tverberg theorem } has a setting
similar to that of Tverberg's theorem. Again we have
a set $C\subset\R^d$ and seek $r$ pairwise disjoint subsets
whose convex hulls all share a point, but this time these subsets 
have to satisfy an additional restriction.

We introduce the following terminology. Let $C\subset\R^d$ be
a finite set  partitioned into $k$ \emph{color classes}
$C_1,C_2,\ldots,C_{k}$ (in other words, each point of $C$
is colored by one of $k$ colors). A subset $R\subseteq C$
is \emph{rainbow} if it contains at most one
point of each color, i.e., $|R\cap C_j|\le 1$ for all~$j$.

\labpdffig{radtve}{Radon's lemma, Tverberg's theorem,
the colored Tverberg theorem, and the Blagojevi\'c--Matschke--Ziegler
theorem: planar illustrations.}

\labpdffig{clusters}{A configuration with no Tverberg $r$-partition. }

A \emph{rainbow $r$-partition} for $C$ is an ordered
$r$-tuple $\RR=(R_1,\ldots,R_r)$ of pairwise disjoint rainbow subsets of $C$.
We stress that, for technical convenience, and with a mild abuse of the terminology ``partition'', 
we generally do \emph{not} require that the $R_i$ cover all of 
$C$ (if they do, we speak of a \emph{maximal rainbow $r$-partition}). 

A rainbow $r$-partition is \emph{Tverberg} if it has
a \emph{Tverberg point}, i.e., a point $x\in\bigcap_{i=1}^r\conv(R_i)$
(which usually does not belong to~$C$). The colored Tverberg theorem
can then be stated as follows.


\begin{theorem}[Colored Tverberg theorem]\label{t:ctve}
 For every $d\ge 1$ and $r\ge 2$
there exists $t$ such that whenever $C\subset\R^d$
is a set of $(d+1)t$ points partitioned into
$t$-point subsets $C_1,\ldots,C_{d+1}$, then there is
a Tverberg rainbow $r$-partition $\RR=(R_1,\ldots,R_r)$ for~$C$.
\end{theorem}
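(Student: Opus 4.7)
The overall plan is to follow the Blagojevi\'c--Matschke--Ziegler (BMZ) strategy, reducing to the case of prime $r$ and then replacing the equivariant topological obstruction argument by a direct geometric signed count. A preliminary reduction: by Bertrand's postulate, choose a prime $p$ with $r \le p \le 2r$; it suffices to prove the theorem for $p$ in place of $r$, since one can discard $p-r$ of the parts of a Tverberg rainbow $p$-partition without losing the common point of the convex hulls of the remaining $r$ parts. So I may assume throughout that $r$ is prime.

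With $r$ prime fixed, the natural configuration space of candidate rainbow $r$-partitions is encoded by labelings of the points of $C$ with labels from $\{0,1,\ldots,r\}$ respecting the rainbow constraint (label $i>0$ meaning the point enters $R_i$, label $0$ meaning it is unused), together with barycentric weights on each of the $r$ chosen sets. For such data one forms $r$ points, one per part, as weighted averages of its members, and records their $r-1$ pairwise differences as a ``test vector''; Tverberg rainbow partitions are precisely the zeros of this test map. The BMZ proof detects such a zero via an equivariant cohomological obstruction under the $\mathbb{Z}/r$-action permuting the parts, and the de-topologized goal is to replace that obstruction by a hands-on geometric count.

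Concretely, I would proceed as follows. For a generic configuration $C$ and a generic target offset $\eps$ close to the origin in the ambient target space, I would count, modulo $r$, the rainbow labelings together with barycentric weights whose test vector equals $\eps$, each such ``generalized rainbow Tverberg partition'' weighted by a combinatorial sign defined via orientations of the associated affine maps. The two key claims are: (i) this mod-$r$ count is invariant under generic continuous deformations of $C$; and (ii) it is nonzero for some explicit model configuration---for instance, one in which each color class $C_j$ is a small cluster of $t$ points near the $j$-th vertex of a regular simplex $\Delta^d$, with $t$ chosen as a suitable function of $d$ and $r$. On such a model the rainbow Tverberg partitions can be enumerated and their signs computed essentially by hand, exploiting the symmetry of the simplex.

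The main obstacle will be establishing the deformation invariance in (i). As $C$ is perturbed, pairs of rainbow Tverberg partitions can be created or destroyed at codimension-one degeneracies of the configuration space. The geometric task is to show that these degenerate events come in $\mathbb{Z}/r$-orbits of size exactly $r$, so their contributions cancel mod $r$; this is precisely where primality of $r$ enters essentially. Isolating the correct notion of ``geometric sign'' (playing the role of the coefficients in the equivariant chain complex used by BMZ) and verifying that degenerate events organize into full $\mathbb{Z}/r$-orbits is the technical heart of the de-topologized argument, and I expect it to be the most delicate step.
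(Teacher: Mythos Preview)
Your outline misses the central idea of both the BMZ proof and the paper's de-topologization: the \emph{extra-point trick}. The paper does not prove the colored Tverberg theorem for prime $r$ directly. Instead, to handle $r$ parts, it adjoins a single new point $z$ in a new color class $C_{d+2}=\{z\}$, takes each of the original $d+1$ color classes to have exactly $r-1$ points (so the total is $N+1=(d+1)(r-1)+1$, the Tverberg number), and proves the existence of a Tverberg rainbow $r$-partition for this enlarged ``BMZ-collection'' when $r$ is prime; deleting the part containing $z$ then yields an $(r-1)$-partition for the original colors. Thus the paper's primality hypothesis is on $r_0+1$, not on $r_0$, and the bound obtained is $t(d,r_0)=r_0$ whenever $r_0+1$ is prime. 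Your Bertrand reduction to prime $r$ followed by a direct attack (no extra point, $t$ unspecified) is a different program---closer in spirit to \v{Z}ivaljevi\'c--Vre\'cica than to BMZ---and you never say what $t$ is or why the count is nonzero for your model configuration.

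Several further points of divergence matter. The paper does not use a configuration-space/test-map with barycentric weights and pairwise differences; it uses the Sarkaria--Onn transform $\varphi_i(x)=x^+\otimes w_i$, which sends each maximal rainbow $r$-partition $\RR$ (with $z\in R_r$) to an $(N-1)$-simplex $F_\RR\subset\R^N$, and Tverbergness becomes ``$F_\RR$ is hit by a fixed ray from $0$''. The invariant is an honest integer degree, shown to be constant modulo $r!$ under continuous motion because the full symmetric group $S_r$ acts on the $F_\RR$'s and all $r!$ permuted copies cross $0$ simultaneously with the same sign change (Lemma~\ref{l:permuting}); your proposed mod-$r$ count with $\mathbb{Z}/r$-orbits is a different and weaker mechanism. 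Finally, primality enters not in the invariance step but in the \emph{initial value}: the paper computes $\lvert\deg(\C_0)\rvert=((r-1)!)^{d+1}$ for an explicit cluster configuration, and this is nonzero modulo $r!$ precisely when $r$ is prime. Your sketch locates primality in the cancellation of degenerate events, which is not how the argument works here.
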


The theorem is usually stated with $\RR$  \emph{maximal},
in which case each $R_i$
has to be a $(d+1)$-element set containing one point of each color.
However, we chose to omit maximality, since on the one hand, the proof
typically does not yield a maximal $\RR$, and on the other hand,
some thought reveals that, in the situation of Theorem~\ref{t:ctve},
an arbitrary $\RR$ can easily be extended
into a maximal one.

For the colored Tverberg theorem, proving the existence
of any $t$, no matter how large, seems difficult, and the
simplest proof currently known is also the one that yields
the smallest $t$, as we will briefly discuss below.

Let $t(d,r)$ denote the smallest $t$ for which the conclusion
of the theorem holds.
 The configuration with $d+1$ clusters by $r-1$
points each, as in
Fig.~\ref{f:clusters}, where the $i$th cluster is all colored
with color $i$, shows that $t(d,r)\ge r$.

\heading{Historical notes. }
The validity of the colored Tverberg theorem was first conjectured
by B\'ar\'any,
F\"uredi and Lov\'asz~\cite{barany-furedi-lovasz90}, who proved
the case $d=2$, $r=3$, obtaining $t(2,3)\le 7$.
B\'ar\'any and Larman~\cite{barany-larman92} 
settled the planar case, showing $t(2,r)=r$ for all $r$
(their paper also contains Lov\'asz' topological 
proof showing that $t(d,2)=2$ for all $d$).
They conjectured that $t(d,r)=r$ for all $r,d$.

The first proof of the general case of the colored Tverberg theorem
was obtained by  \v Zivaljevi\'c and Vre\'cica~\cite{zivaljevic-vrecica92}
(simpler versions were provided in \cite{bjorner-at-al94,matousek96}).
Their proof is topological, and it builds on the pioneering
works by Bajm{\'o}czy and B{\'a}r{\'a}ny
\cite{BajmoczyBarany} (who gave a new, topological proof
of Radon's lemma) and by
B\'{a}r\'{a}ny, Shlosman, and Sz{\H{u}}cs \cite{BaranySS}
(who provided a topological proof of Tverberg's theorem assuming that
$r$ is a prime number). 

The \v Zivaljevi\'c--Vre\'cica method yields
$t(d,r)\le 2r-1$ for all \emph{prime} $r$. Later, the same bound
was extended to all $r$ that are \emph{prime powers}
\cite{zival-ugII}, using more advanced topological tools
introduced to combinatorial geometry by \"Ozaydin, by Volovikov,
and by Sarkaria.

The most important progress by far since the 1992 \v Zivaljevi\'c--Vre\'cica
proof was achieved by Blagojevi\'c, Matschke, and Ziegler
\cite{blagojevic-matschke-ziegler09arxiv} in 2009. 
They discovered a new proof, also topological, which yields
the optimal bound $t(d,r)=r$ \emph{whenever $r+1$ is a prime number}.

Their main trick is both simple and surprising; at first sight,
it seems strange that it might help in such a radical way.
Namely, to the point set $C=C_1\cup\cdots\cup C_{d+1}$
 as in the colored Tverberg theorem, with $|C_1|=\cdots=|C_{d+1}|=t$,
they first add an (arbitrary) extra point $z$, and color it
with a new color $d+2$, thus forming a singleton
color class $C_{d+2}=\{z\}$. Then they prove the existence
of a Tverberg rainbow $(r+1)$-partition $(R_1,\ldots,R_{r+1})$
for the set $C'=C_1\cup C_2\cup \cdots\cup C_{d+1}\cup C_{d+2}$.
Given such an $(r+1)$-partition, one can simply delete
the set $R_i$ containing the artificial point $z$, and be left
with a Tverberg $r$-partition for the original~$C$;
see the bottom part of Fig.~\ref{f:radtve}. 

We now formulate the main claim of the  Blagojevi\'c et al.\ proof.
With $r$ and $d$ fixed,
let us call a $(d+2)$-tuple $\C=(C_1,\ldots,C_{d+2})$ of
pairwise disjoint sets in $\R^d$ a
\emph{BMZ-collection} (BMZ standing for Blagojevi\'c--Matschke--Ziegler)
if $|C_1|=|C_2|=\cdots=|C_{d+1}|=r-1$ and $|C_{d+2}| = 1$.

\begin{theorem}[Blagojevi\'c--Matschke--Ziegler theorem]
\label{t:main} For every $d\ge 1$ and every \emph{prime} number $r$,
every BMZ-collection $\C$ admits a Tverberg rainbow $r$-partition
$(R_1,\ldots,R_{r})$.
\end{theorem}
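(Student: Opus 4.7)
The plan is to de-topologize the Blagojevi\'c--Matschke--Ziegler argument by replacing its equivariant cohomological core with an explicit signed count of Tverberg rainbow $r$-partitions. First, I would assume $\C$ is in general position and recover the general case by a compactness/limit argument. By the standard dimension count underlying Tverberg's theorem, $\sum_i (|R_i|-1) = d(r-1)$ exactly matches the $d(r-1)$ scalar equations that express the coincidence of the candidate Tverberg points $x_i = \sum_{p\in R_i}\lambda_p^{(i)}\, p$. Hence, in general position, every rainbow $r$-partition $\mathcal{R}$ whose convex hulls meet has a \emph{unique} Tverberg point $x(\mathcal{R})$, and only finitely many $\mathcal{R}$ contribute.

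To each such generic $\mathcal{R}$ I would attach a sign $\epsilon(\mathcal{R}) \in \{\pm 1\}$ obtained from the orientation of the $r$-fold simplicial intersection at $x(\mathcal{R})$ --- concretely, the $\sgn$ of a determinant built from the barycentric coordinates and the $p\in\bigcup_i R_i$ --- and set $N(\C) := \sum_{\mathcal{R}} \epsilon(\mathcal{R})$. The heart of the proof is the claim that $N(\C)$ taken modulo $r$ is invariant under generic homotopies of the configuration. Verifying this reduces to a local analysis of three kinds of transitions: birth--death pairs of Tverberg partitions (which cancel, since the two new contributions carry opposite signs); coincidences of Tverberg points of distinct $\mathcal{R}$ (similar cancellation); and ``facet crossings'' where some $\lambda_p^{(i)}$ passes through $0$ and the combinatorial type of $\mathcal{R}$ changes. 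The facet crossings must be organized into groups whose signs sum to $0$ mod $r$, and this is where primality of $r$ enters: cyclic rotation of $(R_1,\ldots,R_r)$ defines a \emph{free} $\mathbb{Z}/r$-action on generic Tverberg partitions (freeness comes from primality together with the pairwise disjointness of the $R_i$), so nontrivial orbits contribute multiples of $r$.

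Finally, I would evaluate $N(\C)$ on one concrete model BMZ-collection --- for example, clustering $C_j$ tightly near the $j$-th vertex of a regular $d$-simplex and placing $z$ near its barycenter --- where the Tverberg rainbow $r$-partitions admit an explicit combinatorial enumeration. Showing $N(\C) \not\equiv 0 \pmod r$ should reduce to a binomial-coefficient identity made nonzero modulo $r$ by Lucas's theorem (or a comparable primality input such as Fermat's little theorem). Together with mod-$r$ invariance, this yields $N(\C) \ne 0$ for every BMZ-collection, which produces the desired Tverberg rainbow $r$-partition.

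I expect the main obstacle to be the mod-$r$ invariance step: topologically this is the non-vanishing of an equivariant Euler class, and its geometric translation requires careful sign bookkeeping across the facet-crossing strata of the parameter space and a clean accounting of the $\mathbb{Z}/r$-orbits in each stratum. A secondary difficulty is choosing the model configuration so that the Tverberg partitions can be enumerated by hand \emph{and} primality of $r$ appears in an essential, non-cosmetic way in the final combinatorial count.
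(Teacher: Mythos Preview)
Your outline parallels the paper's strategy---a signed count of Tverberg rainbow $r$-partitions, invariance under continuous motion, explicit evaluation on a model configuration---but two of your key mechanisms are misidentified, and these are genuine gaps rather than missing details.

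First, your placement of primality is wrong. The cyclic $\mathbb{Z}/r$-action on ordered tuples $(R_1,\ldots,R_r)$ of pairwise disjoint nonempty sets is free for \emph{every} $r$: a nontrivial shift fixing the tuple would force all $R_i$ equal, contradicting disjointness. So freeness cannot be where primality enters, and if your invariance argument really only used freeness of this action it would prove the theorem for all $r$, which is too much. In the paper, the invariance holds modulo $r!$ (using the full $S_r$-orbit, not just the cyclic subgroup), the degree of the model configuration---your cluster-near-simplex-vertices example is exactly the one used---equals $\pm((r-1)!)^{d+1}$, and primality enters only as the elementary fact that $(r-1)!$ is coprime to $r$. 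No Lucas-type identity is needed.

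Second, your sign is underspecified in a way that breaks the argument. The paper's sign is a product $\gsgn(\RR)\cdot\csgn(\RR)$: a geometric determinant sign \emph{and} a combinatorial sign, the latter being the product of the signs of the $d+1$ permutations of $[r]$ encoded by the rainbow placement. The combinatorial factor is what makes adjacent top simplices glue with opposite orientations (the pseudomanifold step) and what makes the total sign $S_r$-invariant, so that all $r!$ permuted partitions cross the Tverberg locus simultaneously with the \emph{same} contribution. A purely geometric sign built from barycentric coordinates has neither property. The paper also sidesteps your ad hoc ``birth--death / facet-crossing'' case analysis entirely by lifting via the Sarkaria--Onn tensor map $p\mapsto p^+\otimes w_i$ into $\R^N$: each $\RR$ becomes an $(N-1)$-simplex $F_\RR$, ``Tverberg'' becomes ``$F_\RR$ meets a fixed ray from $0$'', and the permuted $F_{\RR^\pi}$ are related by explicit linear automorphisms of $\R^N$ fixing $0$, so they pass through $0$ at the same instant. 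That single device replaces all three of your transition types by an ordinary degree argument.
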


We note that for proving the colored Tverberg theorem with $r=r_0$,
one uses the Blagojevi\'c--Matschke--Ziegler theorem with $r=r_0+1$.

Theorem~\ref{t:main} was first proved,
in a preliminary version of \cite{blagojevic-matschke-ziegler09arxiv},
 using relatively heavy topological machinery, 
by computing a certain obstruction in cohomology (this method
also yields additional results; see 
\cite{BMZ:OptimalBoundsTverbergVrecica-2011}).
Then Vre\'cica and
\v{Z}ivaljevi\'c~\cite{vrecica-zivaljevic09arxiv} found a simpler,
degree-theoretic proof, and independently, the authors
of~\cite{blagojevic-matschke-ziegler09arxiv} obtained a similar
simplification.

We should remark that the Blagojevi\'c--Matschke--Ziegler theorem
has a more general version, which is perhaps even nicer
and more natural.
Namely, for $r$ prime, whenever $N+1$ points in $\R^d$, $N=(d+1)(r-1)$,
are partitioned into classes $C_1,\ldots,C_m$, $m\ge d+1$, with
each $C_i$ of size
at most $r-1$, then there is a Tverberg rainbow $r$-partition.
This, for instance, also contains the original Tverberg theorem
as a special case. For simplicity, though, we will
consider only Theorem~\ref{t:main} in the rest of this paper.

\heading{This paper. } Our
main purpose is to present  an elementary and
 self-contained geometric proof of Theorem~\ref{t:main} 
(and thus of the colored Tverberg theorem as well).
We follow the basic strategy of the degree-theoretic
proof in \cite{vrecica-zivaljevic09arxiv,blagojevic-matschke-ziegler09arxiv}.
However, we replace the abstract \emph{deleted product construction}
by a concrete geometric construction due to Sarkaria \cite{sarkaria92}
(with a simplification by Onn). 

In this way, the basic scheme of the proof is clear and intuitive.
A rigorous elementary presentation avoiding topological tools is not entirely
simple, however, mainly because we have to deal with various issues 
of general position. These issues do not arise in the topological proof,
since they are dealt with on a general level when 
building the topological apparatus. 

One can say that our proof is a ``de-topologized'' version of the proofs in \cite{vrecica-zivaljevic09arxiv,blagojevic-matschke-ziegler09arxiv}.
In a similar sense,
Sarkaria's proof \cite{sarkaria92} of Tverberg's theorem
can be regarded as a de-topologized version of his earlier topological
proof of Tverberg's theorem \cite{Sarkaria-flores}. It has become
one of the most cited proofs, and often it is regarded as the 
standard proof, see, e.g., \cite[Section 1.2]{Kalai:CombinatoricsConvexity-95}, \cite[p. 30b]{Grunbaum:ConvexPolytopes2nd-2003}.\footnote{We remark that another possible strategy 
for de-topologizing Tverberg-type statements was suggested in \cite{Zivaljevic-InPursuitOfTheColoredCaratheodoryBaranyTheorems-1995}. The so-called \emph{guiding principle} 
on p.~94 of that paper suggests a certain way of relaxing the symmetry (equivariance)
 condition to obtain a more general geometric statement that might be more amenable to a purely 
 geometric proof in the spirit of Sarkaria's proof of the geometric Tverberg theorem.}
 
Another example of de-topologization is a combinatorial proof of Kneser's conjecture 
\cite{Mat-kne}; developing this approach further, Ziegler \cite{Ziegler-kne}
was able to prove all known generalizations of Kneser's conjecture,
plus some new ones, in a combinatorial way.

We hope that an elementary, de-topologized proof of the colored Tverberg theorem 
will stimulate further research by making the proof more intuitive and concrete and 
accessible to a wider audience. For example, this might help in attacking the open 
cases of the B\'ar\'any--Larman conjecture (the validity of the claim of Theorem~\ref{t:main}
for non-prime~$r$). 

While all known topological proofs of Tverberg's theorem work
only for $r$ that is a prime power, Sarkaria's de-topologized 
proof  \cite{sarkaria92} overcomes this restriction and works for all~$r$. 
Unfortunately, our de-topologization does not help in removing the 
restriction of prime $r$ in Theorem~\ref{t:main}. If anything, it helps 
in seeing more clearly why the proof method of Blagojevi\'c et al.~fails 
whenever $r$ is not a prime; see Section~\ref{s:concl} for a discussion.

\section{Outline of the proof }

Here we sketch the main steps of the proof, proceeding informally
and glossing over many details.

We begin with a fixed BMZ-collection $\C=(C_1,\ldots,C_{d+1},C_{d+2}=\{z\})$.
We assume that the points of $\C$ are in a sufficiently general
position; if they are not, we use a standard perturbation argument.

We consider the system $\bR=\bR(\C)$
of all the \emph{maximal}
rainbow $r$-partitions $\RR=(R_1,\ldots,R_r)$ for $\C$,
Tverberg or not, for which $z\in R_r$.

Using a construction as in Sarkaria \cite{sarkaria92},
with each $\RR\in\bR$ we associate an $N$-dimensional
simplex $S_\RR$ in $\R^N$. (More precisely, some of the
$S_\RR$ may be degenerate, i.e., only $(N-1)$-dimensional,
even for $\C$ in general position, but this will not matter---so
for the purposes of this outline, we pretend that they are all
$N$-dimensional.)
The key property of this construction is  that
$\RR$ is Tverberg iff  $S_\RR$ contains the origin~$0$.

Moreover, all of the $S_\RR$ have one vertex $z^*$
in common. Let $F_\RR$ be the facet of $S_\RR$ opposite to $z^*$;
this is an $(N-1)$-dimensional simplex avoiding~$0$. 
Then we get that 
$\RR$ is Tverberg iff the ray $\rho$ emanating from $0$
in the direction opposite to $0z^*$ meets $F_\RR$; see Fig.~\ref{f:pseudoman}
for a schematic planar illustration.

\labpdffig{pseudoman}{A schematic illustration of the situation
in $\R^N$.}

Next, it turns out that the union $\Sigma$ of all the $F_\RR$ forms
something like a (possibly self-intersecting) hypersurface in $\R^N$,
and one can define the \emph{degree} of $\Sigma$,
a standard notion in topology. (Since $\Sigma$ is determined by
$\C$, we also speak of the degree of $\C$ and write $\deg(\C)$.)

Intuitively, the degree counts how many times $\Sigma$ ``winds'' around $0$.
Its absolute value  is a lower bound for the number of times a ray like $\rho$
intersects $\Sigma$. Thus, if we can show that the degree of
$\Sigma$ is always nonzero, then $\rho$
has to intersect at least one $F_\RR$, and the existence of a Tverberg
rainbow $r$-partition follows.


First we need to equip $\Sigma$ with
an \emph{orientation}, which means designating one of the
``sides'' of $\Sigma$ as positive and the other as negative;
see Fig.~\ref{f:pseudoman1}.
The orientation is defined locally: we determine positive
and negative side for every $F_\RR$, in a globally consistent
way. The definitions must match at the ``seams'' where two of the
$F_\RR$'s meet in an $(N-2)$-dimensional face.\footnote{From the
topological point of view, in this part we verify the well-known
fact (cf.~\cite{bjorner-at-al94})  that 
the abstract simplicial complex underlying $\Sigma$
is an \emph{orientable pseudomanifold}; this is a crucial
part of the proof, as well as of the proofs in
\cite{blagojevic-matschke-ziegler09arxiv,vrecica-zivaljevic09arxiv}.}  

\labpdffig{pseudoman1}{Defining the degree of $\Sigma$;
the positive side of $\Sigma$ is marked gray.}

Then we define  the degree of $\Sigma$ as the number of times the ray $\rho$
passes from the negative side of $\Sigma$ to the positive side
minus the number of times it passes from the positive side
to the negative one (in the picture, the degree is $+2$).
As expected, the degree does not depend on the choice of
the ray $\rho$---any other ray emanating from $0$ yields the
same number. 

It remains to verify that $\deg(\C)\ne 0$, and this is done
by a ``continuous motion'' argument. Namely, we fix a 
special BMZ-collection $\C_0$ for which the degree can be explicitly
computed. Then we consider
a continuous motion of the points of $\C_0$ that transforms
it to the given BMZ-collection $\C$. 
We follow the corresponding
motion of $\Sigma$ in $\R^N$ and look what happens to
its degree. It can change only when some of the $F_\RR$ pass through $0$.

We divide our collection $\bR$ of rainbow $r$-partitions
into classes of an equivalence $\sim$, where $\RR\sim\RR'$ if
$\RR'$  can be obtained
from $\RR$ by permuting the $R_i$ and, if needed,
moving $z$ back to the $r$th class. For example,
$$
\RR=(R_1,R_2,R_3)\sim 
\RR'=(R_2,R_3\setminus\{z\},R_1\cup\{z\}).
$$
Each class has $r!$ members, and it turns out that,
during the continuous motion, the simplices $F_\RR$
for all $\RR$ in the same class always pass through $0$
simultaneously, and their contributions
to the degree change by the same amount.

It follows that $\deg(\C)$ may change only by multiples
of $r!$ during the motion. Since the degree for the
special BMZ-collection $\C_0$ comes out as $D_0=\pm((r-1)!)^d$,
the degree for every $\C$ is congruent to $D_0$ modulo $r!$.

Here, finally, the primality of $r$ comes into play.
When $r$ is a prime, and only then, we have $D_0\not\equiv 0
\,({\rm mod}\, r!)$, and hence the degree is always nonzero as needed.

On the other hand, there are non-prime $r$ for which  BMZ-collections
$\C$ exist with degree $0$,  so indeed the proof method
breaks down (we suspect that this is the case for
\emph{all} non-prime $r$, but we have no proof 
at present). Of course, if the claim of the Blagojevi\'c--Matschke--Ziegler
theorem failed for some (non-prime) $r$, one would have to look for
a counterexample among the $\C$ with degree~$0$.

\section{The Sarkaria--Onn transform}
\label{s:so}

We start filling out the details in the above outline.
First we introduce the construction that assigns a point set in $\R^N$
to every rainbow $r$-partitions of a given BMZ-collection.
We present it in a slightly more general setting, ignoring
the ``rainbow'' aspect.

We will use the notation $[k]=\{1,2,\ldots,k\}$ for a positive
integer $k$.

For a point $x\in\R^d$ we write
$x^+$ for the vector $(x,1)\in\R^{d+1}$ obtained
by appending the component $1$ to~$x$. 

Let $w_1,\ldots,w_r$ be vectors in $\R^{r-1}$ forming the vertex
set of a regular $(r-1)$-dimensional simplex with center at the origin;
Fig.~\ref{f:w1wr} illustrates the case $r=3$.
We have $w_1+w_2+\cdots+w_r=0$.\footnote{The easiest way to see this to represent the regular $(r-1)$-simplex as the convex hull of the $r$ standard basis vectors in $\R^{r}$. Then we can identify $\R^{r-1}$ with the hyperplane $\{x\in \R^r\colon \sum_{i=1}^r x_i=1\}$, and choose a coordinate system such that the origin lies at the barycenter of the simplex, i.e., vector with all coordinates equal to $1/r$.} Moreover, if $\alpha_1,\ldots,\alpha_r$
are real numbers with $\alpha_1w_1+\cdots+\alpha_rw_r=0$, we have
$\alpha_1=\alpha_2=\cdots=\alpha_r$, 
since every $r-1$ of the
$w_i$ are linearly independent.

\labpdffig{w1wr}{The vectors $w_1,\ldots,w_r$ for $r=3$.}

For $x\in \R^d$ and an index $i\in[r]$, we
define a point 
$$
\varphi_i(x):= x^+ \otimes w_i\in \R^N,
$$
called the \emph{$i$th clone} of~$x$.
Here $N=(d+1)(r-1)$, and $\otimes$ stands for the (standard) tensor product:
for arbitrary vectors $u\in\R^m$ and $v\in\R^n$,
$u\otimes v$ is the vector 
$$(u_1v_1,u_1v_2,\ldots,u_1v_n,u_2v_1,u_2v_2,\ldots,u_mv_n)\in\R^{mn}.
$$

Now let $\PP=(P_1,P_2,\ldots,P_r)$ be an $r$-partition in $\R^d$, i.e.,
an $r$-tuple of pairwise disjoint finite sets in $\R^d$
(but the disjointness will be used only for a convenient
notation; the claims below remain valid even if the $P_i$
may share points). Let $P=P_1\cup\cdots\cup P_r$ be the ground set.

We define the \emph{Sarkaria--Onn transform} of $\PP$ as the
point set
$$
\Phi(\PP):=\bigcup_{i=1}^r \{\varphi_i(p): p\in P_i\},
$$
and we let
$$
S_\PP :=\conv(\Phi(\PP)).
$$
In words, for every point $p\in P_i$ we put  the $i$th clone 
of~$p$ in~$\Phi(\PP)$.

The following lemma is essentially from \cite{sarkaria92}.

\begin{lemma}[Sarkaria--Onn]
\label{l:sarkaria}
Let $\PP$ be an $r$-partition in $\R^d$. Then $\PP$
has a Tverberg point, i.e., satisfies $\bigcap_{i=1}^r\conv(P_i)\ne\emptyset$,
if and only if $0 \in S_{\PP}$.
\end{lemma}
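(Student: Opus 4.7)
The plan is to prove both implications directly by manipulating convex combinations through the tensor-product construction, relying crucially on the defining property of the $w_i$: that $\sum_i \alpha_i w_i = 0$ forces $\alpha_1=\cdots=\alpha_r$.

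For the easy $(\Rightarrow)$ direction, I would take a Tverberg point $x\in\bigcap_{i=1}^r \conv(P_i)$ and, for each $i$, express $x$ as a convex combination $\sum_{p\in P_i}\lambda_{p,i}p$; lifting to $\R^{d+1}$ gives $x^+ = \sum_p\lambda_{p,i}p^+$. Averaging over $i$ with weights $1/r$ and pulling $x^+$ out via the bilinearity of $\otimes$, the sum $\sum_{i,p}(\lambda_{p,i}/r)\,\varphi_i(p)$ collapses to $(1/r)\,x^+\otimes\sum_i w_i = 0$, using $\sum_i w_i=0$. The resulting coefficients $\lambda_{p,i}/r$ are nonnegative and sum to $1$, so $0$ is exhibited as a convex combination of points of $\Phi(\PP)$.

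The content lies in the converse. Suppose $0=\sum_{i,p}\mu_{p,i}\varphi_i(p)$ is a convex representation, and group by color by setting $v_i := \sum_{p\in P_i}\mu_{p,i}\,p^+\in\R^{d+1}$, so that bilinearity yields $\sum_{i=1}^r v_i\otimes w_i = 0$. The key step is to read this identity one coordinate of $v_i$ at a time: for each index $k\in[d+1]$, the equation becomes $\sum_i v_{i,k}\,w_i = 0$ in $\R^{r-1}$, which by the stated property of the $w_i$ forces $v_{1,k}=\cdots=v_{r,k}$. Hence all $v_i$ coincide; write the common value as $v=(u,s)$ with $u\in\R^d$ and $s\in\R$.

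To finish, I would normalize. The last coordinate $s=\sum_{p\in P_i}\mu_{p,i}$ is the same for every $i$, so summing over $i$ gives $rs=\sum_{i,p}\mu_{p,i}=1$ and hence $s=1/r$. Then $x:= u/s$ satisfies $x=\sum_{p\in P_i}(\mu_{p,i}/s)\,p$, a genuine convex combination over $P_i$, simultaneously for \emph{every} $i$, so $x\in\bigcap_i\conv(P_i)$. The only real conceptual step is the coordinate-wise extraction in the previous paragraph; the rest is bookkeeping. Since the construction $\varphi_i(x)=x^+\otimes w_i$ is engineered precisely to make that extraction work, what at first looks like the main obstacle dissolves once the tensor factorization is recognized.
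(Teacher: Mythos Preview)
Your proof is correct and follows essentially the same route as the paper's: both directions proceed by unpacking the tensor product via bilinearity and invoking the property that $\sum_i \alpha_i w_i=0$ forces all $\alpha_i$ equal. Your presentation is slightly more explicit (you spell out the coordinate-wise extraction and compute $s=1/r$ exactly, whereas the paper just notes $A>0$), but the argument is the same.
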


\begin{proof} For the reader's convenience,
we sketch a proof;  the omitted details  are  easy to fill in.

First, let us suppose that $x\in \bigcap_{i=1}^r\conv(P_i)$
is a Tverberg point. Thus, for every $i$ we can write
$x=\sum_{p\in P_i}\xi_p p$ for some nonnegative reals
$\xi_p$ with $\sum_{p\in P_i}\xi_p=1$.
Then it is easy to check that 
$$
0=\frac1r\sum_{i=1}^r \sum_{p\in P_i} \xi_p \varphi_i(p)
$$
holds, and that this expresses $0$ as a convex combination of
the points of $\Phi(\PP)$.

Conversely, let us suppose that $0\in S_\PP$. Thus, we can write
\begin{equation}\label{e:tens}
0=\sum_{i=1}^r\sum_{p\in P_i} \alpha_p (p^+\otimes w_i) =
\sum_{i=1}^r \biggl(\sum_{p\in P_i} \alpha_p p^+\biggr)
\otimes w_i
\end{equation}
for some nonnegative $\alpha_p$'s summing to~$1$. Let $A_i:=
\sum_{p\in P_i} \alpha_p$ and $s_i:=\sum_{p\in P_i}\alpha_p p$.
By (\ref{e:tens}) we have $\sum_{i=1}^r A_iw_i=0$, and
so, by the properties of the $w_i$, 
all the $A_i$ are equal to some $A$. Similarly,
all the $s_i$ equal some $s\in\R^d$. Finally, one easily
checks that $A>0$ (since not all of the $\alpha_p$ are $0$)
 and that the point $\frac1A s$ is a Tverberg
point.
\end{proof}

In our considerations, we will need to interpret some other 
properties of $\Phi(\PP)$ in terms of $\PP$. 
We recall that the \emph{affine hull} $\aff(X)$ 
of a (finite) set $X\subseteq\R^d$ is the smallest
affine subspace of $\R^d$ containing $X$.
We also define the \emph{linear affine hull}
$\linaff(X)$ as the translation of $\aff(X)$ to $0$,
or in other words, as the set of all linear combinations
$\sum_{i=1}^n\beta_i x_i$ with $x_1,\ldots,x_n\in X$
and $\sum_{i=1}^n\beta_i=0$.

Let us say that the partition $\PP$ has an
\emph{affine Tverberg point} if
$\bigcap_{i=1}^r \aff(P_i)\ne\emptyset$.
Let us say that $\PP$ has  a
  \emph{Tverberg direction}
if $\bigcap_{i=1}^r\linaff(P_i)\ne \{0\}$; in other words, if there
is a line parallel to each of the $\aff(P_i)$. 

\begin{lemma}\label{l:dictio} For an $r$-partition $\PP$ in $\R^d$,
we have the following equivalences:
\begin{enumerate}
\item[\rm(i)] $0\in \aff(\Phi(\PP))$ iff 
 $\PP$  has an affine Tverberg point.
\item[\rm(ii)] The set $\Phi(\PP)$ is affinely dependent
iff at least one of the $P_i$'s
 is affinely dependent or $\PP$ has a Tverberg direction.
\end{enumerate}
\end{lemma}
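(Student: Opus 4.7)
The plan is to run the same tensor computation that drives Lemma~\ref{l:sarkaria}, but with the constraints on the coefficients $\alpha_p$ relaxed. Given any reals $\{\alpha_p\}_{p\in P}$, introduce the vectors $v_i:=\sum_{p\in P_i}\alpha_p p^+\in\R^{d+1}$, so that
$$
\sum_{i=1}^r\sum_{p\in P_i}\alpha_p\varphi_i(p)=\sum_{i=1}^r v_i\otimes w_i.
$$
The central observation is a vector-valued strengthening of the property of the $w_i$ used in Lemma~\ref{l:sarkaria}: if $\sum_i v_i\otimes w_i=0$, then $v_1=\cdots=v_r$. This follows by applying the scalar statement coordinate-by-coordinate to the $v_i$. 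Write the common value as $v=(s,A)$ with $s\in\R^d$, $A\in\R$; the identity above forces $\sum_{p\in P_i}\alpha_p=A$ and $\sum_{p\in P_i}\alpha_p p=s$ for every $i$.

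For (i), having $0\in\aff(\Phi(\PP))$ means such a relation exists with the additional constraint $\sum_p\alpha_p=1$. Summing the $r$ identities for $A$ yields $rA=1$, so $A=1/r\ne 0$, and $s/A$ lies in every $\aff(P_i)$, an affine Tverberg point. Conversely, given $x\in\bigcap_i\aff(P_i)$ with affine representations $x=\sum_{p\in P_i}\xi_p^{(i)}p$ and $\sum_{p\in P_i}\xi_p^{(i)}=1$, set $\alpha_p:=\xi_p^{(i)}/r$ for $p\in P_i$; then $v_i=\frac{1}{r}(x,1)$ for every $i$, so $\sum_i v_i\otimes w_i=\frac{1}{r}(x,1)\otimes\sum_i w_i=0$ while $\sum_p\alpha_p=1$.

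For (ii), affine dependence of $\Phi(\PP)$ gives such a relation with $\sum_p\alpha_p=0$ and not all $\alpha_p$ zero; now $rA=0$, so $A=0$. If $s=0$, then any class $P_i$ that receives some nonzero $\alpha_p$ (there is one, since not all $\alpha_p$ vanish) admits the nontrivial relation $\sum_{p\in P_i}\alpha_p p=0$ with $\sum_{p\in P_i}\alpha_p=0$, and is therefore affinely dependent. If $s\ne 0$, the identities exhibit $s$ as a common element of every $\linaff(P_i)$, i.e., a Tverberg direction. Both converses are direct: an affine dependence inside a single $P_j$ extended by zeros on $P\setminus P_j$ makes every $v_i$ equal to $(0,0)$; and a Tverberg direction $s$, expressed on each $P_i$ as $s=\sum_{p\in P_i}\alpha_p^{(i)}p$ with $\sum_{p\in P_i}\alpha_p^{(i)}=0$, gives $v_i=(s,0)$, so that $\sum_i v_i\otimes w_i=(s,0)\otimes\sum_i w_i=0$, and the relation is nontrivial because $s\ne 0$.

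The only step requiring even modest attention is the vector-coefficient form of the property of the $w_i$'s, and that reduces immediately to the scalar version used in Lemma~\ref{l:sarkaria}; after that, both parts become bookkeeping plus a case split on whether the $\R^d$-component $s$ of the common vector $v$ vanishes.
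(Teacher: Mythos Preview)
Your proof is correct and follows essentially the same route as the paper's own argument: both reduce the tensor relation to the equalities $\sum_{p\in P_i}\alpha_p=A$ and $\sum_{p\in P_i}\alpha_p p=s$ for all $i$, and then split on whether $A$ (respectively $s$) vanishes. Your version is in fact more complete, since you spell out the converses in both (i) and (ii), which the paper leaves to the reader.
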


\begin{proof} The proof is very similar to that of Lemma~\ref{l:sarkaria}
and we only sketch it, leaving the details to the interested reader.

In (i), the assumption $0\in \aff(\Phi(\PP))$ can be 
written as $\sum_{p\in P}\alpha_p p=0$ for some $\alpha_p$'s 
with $\sum_{p\in P}\alpha_p=1$. As in the proof of Lemma~\ref{l:sarkaria}, 
$\sum_{p\in P}\alpha_p p=0$ implies that the sums $\sum_{p\in P_i}\alpha_p$, $i\in[r]$, are all equal to the same
number $A$ and the sums $\sum_{p\in P_i}\alpha_p p$
are all equal to the same~$s$. From $\sum_{p\in P}\alpha_p\ne 0$
we get $A\ne 0$, and thus
$\frac1A s\in\aff(P_i)$ for all~$i$. 
The reverse implication in (i) is proved by going through
a very similar argument backwards.

As for (ii), we assume that the points of $\Phi(\PP)$ are affinely
dependent, i.e., there exist reals $\alpha_p$, $p\in P$, summing to $0$
and not all zero such that $\sum_{p\in P}\alpha_p p=0$. 
We again have $\sum_{p\in P_i}\alpha_p=A$
and $\sum_{p\in P_i}\alpha_p p=s$ for all $i\in[r]$. 
Since $\sum_{p\in P}\alpha_p=0$,
we get $A=0$.  If $s=0$, then at least one of the $P_i$ is affinely dependent,
and otherwise, $s$ is a nonzero vector in $\bigcap_{i=1}^r\linaff(P_i)$.
Again we omit the reverse implication.
\end{proof}

\section{Sufficiently general position}

\heading{Some conventions for BMZ-collections. }
Now we specialize to BMZ-collections. 
Let $\C=(C_1,\ldots,C_{d+2})$ be a BMZ-collection, and let us
write $C:=C_1\cup C_2\cup\cdots\cup C_{d+2}$ for its ground set.

We also assume that the points of $C$ are numbered
as $c_1,c_2,\ldots,c_{N+1}=z$, in such a way that $C_1$
consists of  the first $r-1$ points $c_1,\ldots,c_{r-1}$,
$C_2$ consists of the next $r-1$ points, etc.

Let $\RR$ be a rainbow $r$-partition for $\C$.
We define the \emph{combinatorial type} of $\RR$
as the set $\{(i,j): c_j\in R_i\}\subseteq [r]\times [N+1]$.

As in the proof outline, let $\bR$ be the collection
of all the maximal rainbow $r$-partitions having the point
$z$ in the last class. 

For $\RR=(R_1,\ldots,R_r)\in\bR$ and a point $a\in C$, we write $\RR-a$
for the rainbow $r$-partition $(R_1\setminus\{a\},\ldots,R_r\setminus\{a\})$
(we remove $a$ from the class it belongs to).

For every $\RR\in\bR$, we have $z\in R_r$, and so
each $\Phi(\RR)$ contains the point
 $z^*=\varphi_r(z)$. We set $F_\RR :=\conv(\Phi(\RR-z))$;
if $S_\RR$ is an $N$-dimensional simplex, which is usually the case,
then $F_\RR$ is the facet opposite to $z^*$ as in the outline.

\heading{Sufficiently general position. }
For defining the degree as sketched in the 
outline, we need that the simplices $F_\RR$ are in a suitably general position.
We adopt a ``functional'' approach, postulating the required properties
in a definition.

We say that $\C$ is in a \emph{sufficiently general position}
if
\begin{itemize}
\item each $F_\RR$ is an $(N-1)$-dimensional simplex,
i.e., its vertices are affinely independent, and
\item  for every $\RR\in\bR$ and every $a\in C$
we have $0\not\in\aff(\Phi(\RR-a))$; geometrically, the affine
span of each facet of $S_\RR$ avoids~$0$.
\end{itemize}

It is easily seen that for $\C$ in sufficiently general position,
the ray $\rho$ as in the outline
(emanating from $0$ in the direction opposite to $0z^*$)
is well defined and  intersects each $F_\RR$ in at most one point, which
lies in the relative interior of~$F_\RR$.

Let $\C,\C'$ be two BMZ-collections. We define their \emph{distance}
in the natural way, as $\max\{\|c_i-c'_i\|:i=1,2,\ldots,N+1\}$
where $\|.\|$ is the Euclidean norm and $c'_i$ is, of course,
the $i$th point of $\C'$.

We want to show that for every BMZ-collection $\C$, there are
BMZ-collections $\C'$ in sufficiently general position arbitrarily
close to it. We proceed by a standard perturbation argument
(an alternative route would be using points with algebraically
independent coordinates in $\C'$). This is a technical and somewhat
tedious part (in the topological proof, it is taken care
by the general machinery, so one need not worry about it).
Still, we prefer to include it, in order to make the proof complete.

\begin{lemma}\label{l:sgp}
Let $\C$ be a BMZ-collection, and let $\eps>0$ be given. Then there
is a BMZ-collection $\C'$ in sufficiently general position at distance
at most $\eps$ from~$\C$.
\end{lemma}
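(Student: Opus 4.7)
The plan is to reduce the lemma to a measure-theoretic perturbation argument. I will show that the set of BMZ-collections failing to be in sufficiently general position forms a Lebesgue-null subset of the configuration space, naturally identified with an open subset of $(\R^d)^{N+1}$ (minus the ``big diagonal'' where two points coincide). Once this is done, the Euclidean ball of radius $\eps$ around $\C$ has positive measure and therefore meets the complement of the bad set, giving the required $\C'$.

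First I would enumerate the finitely many ``bad events'' whose disjunction defines failure of sufficiently general position. There is one event for each combinatorial type $\RR\in\bR$ asserting that the $N$ vertices of $F_\RR=\conv(\Phi(\RR-z))$ are affinely dependent, and one event for each such $\RR$ and each $a\in C$ asserting that $0\in\aff(\Phi(\RR-a))$. In both cases the event is cut out by the vanishing of a polynomial in the $d(N+1)$ coordinates of the points of $\C$: in the first case, a maximal minor of the $(N+1)\times N$ matrix of affinely augmented coordinates of $\Phi(\RR-z)$; in the second, an analogous determinant expressing that $0$ lies in the affine hull of $\Phi(\RR-a)$. Since the zero locus of a not-identically-zero polynomial has Lebesgue measure zero, and a finite union of null sets is null, it suffices to check that each of these polynomials is genuinely nonzero as a function of the coordinates of $\C$.

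For this, I would use Lemma \ref{l:dictio} to translate each bad event back to a condition on the points of $\C$ in $\R^d$. By part (ii), affine dependence of $\Phi(\RR-z)$ is equivalent to one of the rainbow classes of $\RR-z$ (each of size at most $d+1$) being affinely dependent in $\R^d$, or to $\RR-z$ admitting a Tverberg direction. By part (i), $0\in\aff(\Phi(\RR-a))$ is equivalent to $\RR-a$ having an affine Tverberg point, i.e.,\ $\bigcap_{i=1}^{r}\aff(R_i\setminus\{a\})\ne\emptyset$. Both are classical, visibly proper affine-geometric conditions on an $(N+1)$-point configuration in $\R^d$, and to certify that their defining polynomials are not identically zero it is enough to exhibit a single BMZ-collection violating them, for instance one whose points have coordinates algebraically independent over the rationals.

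The final step is routine: the bad set is a finite union of proper algebraic subvarieties of $(\R^d)^{N+1}$, hence of measure zero, so any Euclidean $\eps$-ball around $\C$ meets its complement, and any $\C'$ in that intersection satisfies the two bullets of the definition. The main obstacle I foresee is purely bookkeeping: carefully unraveling each bad condition through the Sarkaria--Onn transform and Lemma \ref{l:dictio} to ensure that it genuinely corresponds to a \emph{nontrivial} affine-geometric condition on $\C$, rather than to one that is forced to hold automatically for every BMZ-collection by some hidden identity built into the tensor product construction.
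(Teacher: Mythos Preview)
Your proposal is correct in outline and follows essentially the same route as the paper: both reduce, via Lemma~\ref{l:dictio}, to the three conditions
\begin{itemize}
\item[(i)] every $\le d+1$ points of $C$ are affinely independent,
\item[(ii)] $\RR-z$ has no Tverberg direction,
\item[(iii)] $\RR-a$ has no affine Tverberg point,
\end{itemize}
and then argue that these hold generically. The paper even explicitly mentions your measure-zero/algebraically-independent-coordinates approach as an alternative to its iterative perturbation. Where the two differ is only in packaging: the paper perturbs point by point, you invoke null sets.

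One caveat worth flagging: your suggested certificate ``take a configuration with algebraically independent coordinates'' is circular as stated. Saying such a configuration avoids the bad variety is \emph{equivalent} to saying the defining polynomial is not identically zero, so it cannot serve as the verification. The actual content---what you label ``purely bookkeeping''---is the dimension count the paper carries out: with $\sum_i |R_i\setminus\{a\}|=N=(r-1)(d+1)$, one gets $\sum_i \dim L_i \le N-r=(r-1)d-1$, so generically $\dim(L_1\cap\cdots\cap L_r)=-1$. This arithmetic is exactly what guarantees the polynomial is nontrivial, and it is where the specific cardinalities of a BMZ-collection enter. So your ``main obstacle'' is not just bookkeeping; it is the substance of the lemma, and your proof is complete only once you supply it (or an explicit witness configuration for each $\RR$ and $a$).
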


\begin{proof} First we observe that for $\RR\in\bR$, since
the classes $R_1,\ldots,R_r$ are rainbow,
each of the classes $R_i$ has at most $d+1$ points,
except possibly for $R_r$, which may contain up to $d+2$ points.

According to Lemma~\ref{l:dictio}, the conditions in the definition
of sufficiently general position of $\C$ are implied by the following:
\begin{enumerate}
\item[(i)] Every at most $d+1$ points of $C$ are affinely independent.
\item[(ii)] For every $\RR\in\bR$, the partition $\RR-z$ has no
Tverberg direction.
\item[(iii)] For every $\RR\in\bR$ and every $a\in C$, 
the partition $\RR-a$ has no affine Tverberg point.
\end{enumerate}

To seasoned geometers, (i)--(iii) are probably obvious by
codimension count. Still, we include a more detailed argument.

We first recall a perturbation argument for achieving (i),
where it is entirely simple and standard.
Condition (i) is a conjunction of ${|C|\choose d+1}$ requirements
of the form ``the points in $C_I:=\{c_i:i\in I\}$ are affinely independent'',
where $I$ runs through all $(d+1)$-element subsets of $C$.
We enumerate all such $I$ as $I_1,I_2,\ldots$ and we
deal with them one by one. 

First we consider $I_1$; say that
$I_1=\{1,2,\ldots,d+1\}$.
The one-point set $\{c_{1}\}$ is affinely independent, of course,
and so is $\{c_{1},c_{2}\}$, assuming that the points of $C$
are all distinct. Next, it is clear that we can move 
$c_{3}$ by at most $\frac\eps 2$ so that 
$C_3:=\{c_{1},c_{2},c_{3}\}$ is affinely independent, too.
Then we successively move $c_4,\ldots,c_{d+1}$, each by at most
$\frac\eps2$, and we we make $C_{I_1}$ affinely independent.
Moreover, crucially, there exists some $\eps_1>0$
such that if we move the points of $C_{I_1}$ arbitrarily by at most $\eps_1$,
then $C_{I_1}$ \emph{remains} affinely independent. Using this $\eps_1$, we make $C_{I_2}$
affinely independent, obtaining some even much smaller $\eps_2>0$,
etc., until all the index sets $I_j$ have been exhausted.

A similar procedure can be applied to achieve (ii) and (iii).
For example, in (iii), we fix $\RR$ and  $a$ and
see how can we make sure that $\RR-a$ has no  affine 
Tverberg point.

Let us write $R^-_i:=R_i\setminus\{a\}$.
Each of the subspaces $L_i:=\aff(R_i)$ has dimension
at most $|R^-_i|-1$. 

In general, if two affine subspaces $K,L\subset\R^d$ of dimensions
$k,\ell$, respectively, are in general position,
we have $\dim(K\cap L)=\max(-1,k+\ell-d)$, where dimension $-1$
means empty intersection. 
Thus, we can move $L_2,L_3,\ldots,L_r$ one by one
(by moving the points of the $R_i^-$), inductively achieving
$\dim(L_1\cap\cdots \cap L_i)= \max(-1,(\sum_{j=1}^i|R_j^-|)-i-(i-1)d)$.
Since $\sum_{j=1}^r|R_j^-|=N=(r-1)(d+1)$, 
we get $\dim(L_1\cap\cdots \cap L_r)=-1$, which means no 
affine Tverberg point.

Condition (ii) is achieved with a very similar dimension-counting,
which we omit.
\end{proof}

\heading{A remark on degenerate $S_\RR$'s. }
The ``exceptional'' $S_\RR$'s that are
only $(N-1)$-dimensional, even for
$\C$ in sufficiently general position, are obtained for the $\RR$
with the last class $R_r$ of size $d+2$. Then $R_r$ cannot
be affinely independent, and thus (by Lemma~\ref{l:dictio})
the vertex set of $S_\RR$ is not affinely independent---the
point $z^*$ is contained in the affine span of $F_\RR$.
But this does not matter since, for $\C$ in sufficiently
general position, the affine span of $F_\RR$ avoids $0$
and thus such an $F_\RR$ cannot influence the degree.
(Or in other words, such a partition $\RR$ is never Tverberg
for $\C$ in sufficiently general position.)

\heading{Continuous motion of $\C$. }
Later on, in the continuous motion argument, we will need to consider
two BMZ-collections $\C,\C'$ and analyze what happens with the degree
when we continuously move the points, starting from $\C$ and 
ending at~$\C'$.

As we will see, the moving collection can be kept in sufficiently
general position all the time except for finitely many
\emph{critical times}. 

We will also need some control
of what happens at the critical times. Let $\RR\in\bR$ and let
$a\in C$, $a\ne z$. We call the set
$G:=\conv(\Phi(\RR-z-a))$ a \emph{ridge} (if $S_\RR$ is an $N$-simplex,
which is typically the case, then $G$ is a facet of $F_\RR$
and thus a ridge of $S_\RR$). We say that $\C$
is in \emph{almost general position} if all ridges avoid~$0$.

\begin{lemma}\label{l:motion-gen}
Let $\C,\C'$ be BMZ-collections in sufficiently general position. 
Then there is a continuous
family $\C^{(t)}$ of BMZ-collections, $t\in [0,1]$,
such that $\C^{(0)}=\C$,
$\C^{(1)}=\C'$, each $\C^{(t)}$ is in
almost general position, and there is a finite set
$T\subset [0,1]$ of critical times such that
$\C^{(t)}$ is in sufficiently general position
for all $t\not\in T$. 
\end{lemma}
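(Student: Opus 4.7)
The plan is to define $\C^{(t)}$ by a straight-line homotopy from $\C$ to $\C'$, modified if necessary by a small generic smooth bump, and use a transversality argument to ensure the required genericity properties. I would work throughout in the parameter space $X := (\R^d)^{N+1}$ of all BMZ-collections, where the labeling of points by color classes is regarded as fixed.

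First I would identify two ``bad'' subsets of $X$. Define $B_{\mathrm{sgp}} \subset X$ as the set of $\C$ that fail sufficiently general position; by Lemma~\ref{l:dictio} combined with conditions (i)--(iii) in the proof of Lemma~\ref{l:sgp}, membership in $B_{\mathrm{sgp}}$ is equivalent to the vanishing of at least one of a finite list of polynomials in the coordinates of $\C$ (one appropriate determinant per combinatorial type of $\RR$ and choice of $a$). Hence $B_{\mathrm{sgp}}$ is a finite union of real algebraic hypersurfaces in $X$, of codimension $1$. Define next $B_{\mathrm{agp}} \subset X$ as the set of $\C$ failing almost general position, i.e., for which some ridge $\conv(\Phi(\RR-z-a))$ with $a \in C \setminus \{z\}$ contains $0$. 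Since $\Phi(\RR-z-a)$ consists of only $N-1$ points in $\R^N$, its affine hull has dimension at most $N-2$, so already the weaker condition ``$0 \in \aff(\Phi(\RR-z-a))$'' is codimension $\ge 2$ in $\R^N$, and correspondingly $B_{\mathrm{agp}}$ is a semi-algebraic subset of $X$ of codimension $\ge 2$.

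Next I would invoke standard transversality. Any sufficiently generic smooth arc $\gamma:[0,1] \to X$ from $\C$ to $\C'$ meets the codimension-$1$ hypersurfaces comprising $B_{\mathrm{sgp}}$ transversally, hence in only finitely many parameter values $t$, and misses the codimension-$\ge 2$ set $B_{\mathrm{agp}}$ altogether. Concretely, I would take $\C^{(t)} := (1-t)\C + t\C' + \tau(t)v$, where $\tau:[0,1] \to [0,1]$ is a fixed smooth bump with $\tau(0)=\tau(1)=0$ and $v \in X$ is chosen generically with $|v|$ sufficiently small. The set of $v$'s for which this perturbed path fails the required transversality conditions has Lebesgue measure zero in $X$, so almost every small $v$ works; the finitely many parameter values at which the arc crosses $B_{\mathrm{sgp}}$ then form the desired critical set $T$, while at all other $t$ the collection $\C^{(t)}$ lies in $X \setminus (B_{\mathrm{sgp}} \cup B_{\mathrm{agp}})$.

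The main technical obstacle is justifying the codimension-$\ge 2$ estimate for $B_{\mathrm{agp}}$: one must verify that for each combinatorial type $(\RR, a)$ with $a \neq z$, the locus in $X$ defined by ``$0 \in \aff(\Phi(\RR-z-a))$'' is genuinely codimension $\ge 2$, i.e., that the obvious dimension count is not defeated by some hidden degeneracy in the tensorial Sarkaria--Onn construction. This reduces to showing that the smooth map sending the $N-1$ relevant points of $\C$ in $\R^d$ to their cloned images in $\R^N$ is a submersion onto a subspace in which the affine-hull condition stays of codimension $\ge 2$, which follows from the affine-linearity of each $\varphi_i$ on $\R^d$ together with the linear independence of any $r-1$ of the vectors $w_1,\dots,w_r$. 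Granting this codimension count, the transversality discussion above delivers the required family $\C^{(t)}$.
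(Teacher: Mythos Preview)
Your approach is correct and takes a genuinely different route from the paper's proof. The paper moves one point at a time along a straight segment, using the freedom to perturb the target position $c''_1$; for each of the conditions (i)--(iii) from the proof of Lemma~\ref{l:sgp} it carries out an explicit dimension count in $\R^d$, showing that the swept subspace $\overline{L_r}$ meets $L_1 \cap \cdots \cap L_{r-1}$ in at most one point (yielding finitely many critical times), and for almost general position the count drops by one more so the intersection is empty throughout. You instead package all the conditions into two semi-algebraic bad sets $B_{\mathrm{sgp}}$ (codimension $\ge 1$) and $B_{\mathrm{agp}}$ (codimension $\ge 2$) in the full configuration space $X$ and apply parametric transversality to a perturbed straight-line path. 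Your version is more conceptual and handles all combinatorial types at once; the paper's is more elementary and avoids appealing to transversality theorems, in keeping with its goal of a self-contained ``de-topologized'' proof.

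The weak point in your write-up is the justification of the codimension-$\ge 2$ claim for $B_{\mathrm{agp}}$. Your submersion sketch does not quite close the gap: knowing that each $\varphi_i$ is affine and that any $r-1$ of the $w_k$ are linearly independent does not by itself show that the pullback of the codimension-$2$ determinantal locus in $(\R^N)^{N-1}$ to the much lower-dimensional image of the cloning map still has codimension $\ge 2$. The cleanest fix is to use Lemma~\ref{l:dictio}(i) and work directly in $\R^d$: the condition $0 \in \aff(\Phi(\RR-z-a))$ is equivalent to $\RR-z-a$ having an affine Tverberg point, and since $\RR-z-a$ has only $N-1$ points the count $\sum_i \dim L_i - (r-1)d \le (N-1-r)-(r-1)d = -2$ gives codimension $\ge 2$ directly. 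This is precisely the paper's computation with one fewer point than in its verification of condition~(iii), so both arguments ultimately rest on the same arithmetic.
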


\begin{proof} For simplicity, we move one point at a time.
It suffices to establish the lemma for $\C,\C'$ such that
 $c_i=c'_i$ for all $i\ne1$. Moreover, since all 
BMZ-collections sufficiently close to $\C'$ are 
also in a sufficiently general position, it is enough
that we can move $c_1$ to any position $c''_1$ sufficiently
close to $c'_1$, in a way satisfying the conclusion of the lemma,
since then the motion from $c''_1$ to $c'_1$ is for free.

Thus, from now on we assume that $c_1$ moves to $c''_1$
along a segment at uniform speed, while all the other points
are stationary. Let $c_1^{(t)}$ be the position of the
moving point at time~$t$.

First we check that there are only finitely many times
where $\C^{(t)}$ is not in sufficiently general position.
We need to consider conditions (i)--(iii) from the proof
of Lemma~\ref{l:sgp}. For the sake of illustration, we check (iii),
leaving the rest to the reader. 

Still referring to that proof,
we  consider the affine subspaces
$L_1,\ldots,L_r$ (for a particular $\RR$ and $a$).
 We renumber them so that the moving point
is among those defining $L_r$, so $L_1,\ldots,L_{r-1}$ are
stationary and $L_r^{(t)}$ is moving. Let $\overline L_r:=
\bigcup_{t\in[0,1]}L_r^{(t)}$; since $c_1^{(t)}$ traces
a segment, $\overline L_r$ is contained
in the affine span of $L_r\cup\{c''_1\}$, which is an affine subspace
of dimension  $\dim(L_r)+1$.
By sufficiently general
position of $\C$ we know that
$\dim(L_1\cap\cdots\cap L_{r-1})+\dim (L_r)<d$,
and thus, by altering the position of  $c''_1$ 
by an arbitrarily small amount, we
can achieve that $L_1\cap\cdots\cap L_{r-1}$ meets $\overline L_r$
in at most one  point. This adds at most one critical time.

It remains to check that $\C^{(t)}$ is always in almost general
position, for which the argument is very similar to
the previous one. We want that
all ridges avoid $0$ all the time. We strengthen the condition
to the \emph{affine} span of all ridges avoiding $0$, which
translates into $\RR-z-a$ never having an affine Tverberg point.
Thus, we again deal with affine subspaces $L_1,\ldots,L_r$;
we again assume that $L_1,\ldots,L_{r-1}$ are
stationary and $L_r^{(t)}$ is moving, and $\overline L_r$
be the set traced by $L_r^{(t)}$ during the motion, contained
in an affine subspace of dimension $\dim(L_r)+1$.
However, compared to the previous argument, now the
sum of the dimensions of the $L_i$ is one smaller,
 and this allows us to achieve
$L_1\cap\cdots\cap L_{r-1}\cap \overline L_r=\emptyset$,
again by changing the position of $c''_1$ by an arbitrarily
small amount. 
\end{proof}

\heading{Sufficiently general position may be assumed. }
We will prove Theorem~\ref{t:main} with the additional assumption
that $\C$ is in sufficiently general position.
By Lemma~\ref{l:sgp},
each BMZ-collection can be approximated by such BMZ-collections
 arbitrarily closely, and thus
the validity of Theorem~\ref{t:main} for an arbitrary $\C$
follows 
by a routine limiting argument, which we omit
(see, for example, \cite[Lemma~2]{Tverberg} for a very similar one).

\section{The degree}

For every $(N-1)$-dimensional simplex $F_\RR$, we now 
define a \emph{sign} $\sgn(F_\RR)$ (often
we also write just $\sgn(\RR)$, since $F_\RR$ is
fully determined by $\RR$). 
In the language introduced in the proof outline, the sign $+1$
means that the side of $F_R$ visible from $0$ is negative,
and $-1$ means that it is positive.

The sign is the product of two factors, which we call
the \emph{geometric sign} $\gsgn(\RR)$ and the \emph{combinatorial sign}
$\csgn(\RR)$.

\heading{The geometric sign } is easy to define.
We set up the $N\times N$ matrix $M$ with the coordinates of the
$i$th vertex of $F_\RR$ (we recall that the points
in the ground set $C$ are numbered as $c_1,\ldots,c_{N+1}$,
which induces a linear ordering of the vertices of $F_\RR$), and we put
$$
\gsgn(\RR) :=\sgn \det(M).
$$

\heading{The combinatorial sign } is slightly more complicated.
We recall that the combinatorial type of $\RR$
is the set $\{(i,j): c_j\in R_i\}\subseteq [r]\times [N+1]$.
It can be depicted using an $r\times (N+1)$
array of squares, whose $i$th row corresponds to the sets $R_i$
of $\RR$ and whose $j$th column corresponds to the $j$th point
of $C$; see Fig.~\ref{f:rooks}. Then we place a rook
(chess figure) to each square $(i,j)$ with $c_j\in R_i$.

\labpdffig{rooks}{The combinatorial type of a rainbow $r$-partition
represented by a non-attacking placement of rooks on chessboards.}

Let us think of the array as $d+1$ chessboards, each with
$r$ rows and $r-1$ columns,
placed side by side, plus one ``degenerate'' $r\times 1$
chessboard on the right. Then 
the maximal rainbow $r$-partitions exactly correspond to
maximal placements of mutually non-attacking rooks on each
of the chessboards (in particular, each
of the $r\times(r-1)$ chessboards has $r-1$ rooks
on it). The condition that $z\in R_r$ then
says that the last narrow chessboard should have the rook
in the last row.

The vertices of $F_\RR$ correspond to the rooks
in the first $d+1$ chessboards. The placement of the $r-1$ rooks
on the $k$th chessboard 
defines a permutation $\pi_k$ of $[r]$; namely, for $j\le r-1$,
$\pi_k(j)$ is the index of the row containing the rook
of the $i$th column, and $\pi_k(r)$ is the index of the
unique row with no rook.

The combinatorial sign of $\RR$ is defined as
$$
\csgn(\RR):=\prod_{k=1}^{d+1}\sgn\pi_k.
$$

\heading{The degree. } 
As in the outline, we define 
$$\Sigma=\Sigma(\C):=\bigcup_{\RR\in\bR} F_\RR,
$$
and the degree   of $\Sigma$ is
$$
\deg(\Sigma):=\sum_{\RR\in\bR:\rho\cap F_\RR\ne\emptyset}
\sgn(F_\RR),
$$
where $\sgn(F_\RR)=\sgn(\RR)=\gsgn(\RR)\csgn(\RR)$.
In other words, the degree is the sum of $\sgn(F_\RR)$ over
all $\RR\in\bR$ that are Tverberg. Since $\Sigma$ is determined
by $\C$, we will also write $\deg(\C)$ instead of $\deg(\Sigma)$.

\section{The continuous motion argument}

Here we prove the promised invariance of the degree modulo $r!$.

\begin{prop}\label{p:deginvar}
If $\C$ and $\C'$ are two BMZ-collections (for the same $d$ and $r$),
both in sufficiently general position, then
$$
\deg(\C)\equiv \deg(\C')\,({\rm mod}\,r!).
$$
\end{prop}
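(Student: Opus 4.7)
The plan is to use a continuous motion argument. By Lemma~\ref{l:motion-gen}, interpolate between $\C$ and $\C'$ via a continuous family $\{\C^{(t)}\}_{t\in[0,1]}$ in almost general position throughout and in sufficiently general position except at a finite set $T$ of critical times. Outside $T$, $\deg(\C^{(t)})$ is locally constant: $\csgn(\RR)$ is a combinatorial invariant; $\gsgn(\RR) = \sgn\det M_\RR^{(t)}$ is locally constant since the determinant is continuous and nonzero; and the intersection indicator $[\rho\cap F_\RR\neq\emptyset]$ cannot change without some $F_\RR$ crossing $0$, which does not happen in sufficiently general position. Thus it suffices to show that at each $t_0 \in T$ the degree changes by a multiple of $r!$.

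At a critical time $t_0$, the facets $F_\RR$ passing through $0$ are exactly those with $0\in\aff(F_\RR^{(t_0)})$, which by Lemma~\ref{l:dictio}(i) is equivalent to $\RR-z$ admitting an affine Tverberg point. This is a property of the unordered multiset $\{R_i\setminus\{z\}:i\in[r]\}$, hence preserved under the equivalence $\sim$. Therefore the affected $\RR$'s form a disjoint union of $\sim$-equivalence classes (each of size $r!$), and it suffices to show that each such class contributes a multiple of $r!$ to the degree change at $t_0$.

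Fix $\RR\in\bR$ and, for $\sigma\in S_r$, let $\RR'_\sigma$ be obtained from $\RR$ by permuting its classes by $\sigma$ and moving $z$ back to the last class. A direct computation from the definition of $\Phi$ gives $F_{\RR'_\sigma}=\Psi_\sigma(F_\RR)$, where $\Psi_\sigma:=I_{d+1}\otimes T_{\sigma^{-1}}$ and $T_\sigma\colon w_i\mapsto w_{\sigma(i)}$. Using $\det T_\sigma=\sgn(\sigma)$ (the sign of the permutation representation on the sum-zero hyperplane) and the induced transformation $\pi_k\mapsto\sigma^{-1}\pi_k$ of the rook-permutations on each chessboard, one obtains
$$\gsgn(\RR'_\sigma)=\sgn(\sigma)^{d+1}\gsgn(\RR),\qquad \csgn(\RR'_\sigma)=\sgn(\sigma)^{d+1}\csgn(\RR),$$
so $\sgn(\RR'_\sigma)=\sgn(\RR)$ for every $\sigma\in S_r$. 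Since $\Psi_\sigma$ fixes $0$ and is a linear isomorphism, $0$ lies in the relative interior of $F_{\RR'_\sigma}^{(t_0)}$ iff it does in $F_\RR^{(t_0)}$. In the \emph{outside case} (namely $0\in\aff(F_\RR^{(t_0)})$ but $0\notin F_\RR^{(t_0)}$, possible because almost general position only excludes $0$ from ridges), the intersection with $\rho$ stays empty near $t_0$ for every $\sigma$, so no contribution changes. In the \emph{interior case}, $\det M_{\RR'_\sigma}^{(t)}$ crosses zero transversally at $t_0$ (so $\gsgn(\RR'_\sigma)$ flips), and simultaneously $[\rho\cap F_{\RR'_\sigma}^{(t)}\neq\emptyset]$ flips (the intersection of the line through $\rho$ with $\aff(F_{\RR'_\sigma}^{(t)})$ lies on $\rho$ itself on exactly one side of $t_0$). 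A short sign check then shows that the contribution of each $F_{\RR'_\sigma}$ changes by the same amount $-\sgn_{t_0^-}(\RR)$, independently of $\sigma$; summing over the $r!$ members of the class gives a total change of $\pm r!$ or $0$, as required.

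The main obstacle will be the sign bookkeeping: one must carefully verify the simultaneous flipping of $\gsgn(\RR)$ and of the intersection indicator, so that their product (the contribution) changes by the same amount across the entire equivalence class regardless of whether the intersection appears or disappears at $t_0$. A minor technical point is that Lemma~\ref{l:motion-gen} may need a slight refinement to ensure transversal crossings at critical times (e.g., $\rho$ not parallel to any $\aff(F_{\RR'_\sigma}^{(t_0)})$, and the determinant vanishing only to first order), but this is achievable by standard further perturbation.
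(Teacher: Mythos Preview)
Your overall strategy---continuous motion plus the equivariance of $\sgn$ under the $S_r$-action---is exactly the paper's, and your computation of $\csgn(\RR'_\sigma)$ and $\gsgn(\RR'_\sigma)$ is precisely Lemma~\ref{l:permuting}. The difference lies in how the critical times are handled, and here there is a genuine gap.

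You work throughout with the fixed ray $\rho$ and assert that at a critical time $t_0$ only those $\RR$ with $0\in\aff(F_\RR^{(t_0)})$ can contribute to the degree change. But ``sufficiently general position'' fails not only when $0\in\aff(\Phi(\RR-z))=\aff(F_\RR)$; it also fails when $0\in\aff(\Phi(\RR-a))$ for some $a\neq z$. In the latter case $\gsgn(\RR)$ does \emph{not} flip (since $0\notin\aff(F_\RR)$), yet the indicator $[\rho\cap F_\RR\neq\emptyset]$ can flip: geometrically, a ridge $G=\conv(\Phi(\RR-z-a))$ sweeps across $\rho$ at $t_0$, so the intersection point of $\rho$ with $F_\RR$ exits through $\partial F_\RR$ rather than through~$0$. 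Your case split (``interior'' vs.\ ``outside'') does not cover this, and the resulting contribution change is not a multiple of~$r!$ per facet. (It \emph{is} zero in total, because by Lemma~\ref{l:pseudoman} the ridge $G$ is shared by exactly two facets $F_\RR,F_{\RR'}$ whose signs are arranged so that the two flips cancel---but establishing this is precisely the content of Lemma~\ref{l:moveray}, which you do not invoke.)

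The paper avoids this problem by the ``moving ray'' trick: for each $t_0$ (including critical ones) it chooses a ray $\psi$ that is \emph{generic} for $\C^{(t_0)}$, hence avoids all ridges in a neighborhood of $t_0$; a compactness argument and Lemma~\ref{l:moveray} let one patch these local generic rays together. With $\psi$ generic throughout, the only way an intersection can appear or disappear is through $0$, and then the argument you give (which is essentially the paper's) goes through cleanly without any transversality refinement of Lemma~\ref{l:motion-gen}. So the missing ingredient is Lemma~\ref{l:moveray}; once you add it (or reproduce its cancellation argument for the ridge-crossing case), your proof becomes complete and essentially identical to the paper's.
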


First we want to verify that the simplices $F_\RR$ are ``glued together''
properly. Let us call the $(N-2)$-dimensional faces of $F_\RR$
the \emph{ridges}.

\begin{lemma} \label{l:pseudoman}
Let $G$ be a ridge of some $F_\RR$. Then there is
exactly one ${\RR'}\in\bR$ distinct from $\RR$ having $G$ as a ridge,
and we have $\csgn({{\RR'}})=-\csgn({\RR})$. (In topological
terminology, this is the 
``orientable pseudomanifold'' property.)
\end{lemma}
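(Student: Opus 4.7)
The plan is to translate everything into the rook picture of Figure~\ref{f:rooks} and then carry out a direct combinatorial analysis. The vertices of $F_\RR = \conv(\Phi(\RR - z))$ are the clones $\varphi_i(c)$ with $c \in R_i \setminus \{z\}$, so a ridge $G$ arises from dropping one such vertex, meaning $G = \conv(\Phi(\RR - z - a))$ for some $a = c_{j_a} \in C \setminus \{z\}$. Let $i_a$ be the index with $a \in R_{i_a}$, and let $k \in [d+1]$ be the color of $a$. In rook language, $\Phi(\RR - z - a)$ is the placement of $\RR$ with the rook at $(i_a, j_a)$ deleted from the $k$-th chessboard.

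Existence and uniqueness of $\RR' \in \bR \setminus \{\RR\}$ sharing the ridge $G$ then follow by inspecting this incomplete placement. Any such $\RR'$ satisfies $\Phi(\RR' - z - b) = \Phi(\RR - z - a)$ for some $b \in C \setminus \{z\}$, so the rook placements of $\RR$ and $\RR'$ agree except in a single rook, and the chessboard that is deficient in $\Phi(\RR - z - a)$ is the $k$-th one, where column $j_a$ is empty and the two empty rows are $i_a$ and the originally empty row $i^* := \pi_k(r)$. Since a maximal rainbow $r$-partition must place exactly one rook per column of each of the first $d+1$ chessboards, completing this configuration means adding a single rook to column $j_a$ of the $k$-th chessboard, and the two choices are $(i_a, j_a)$ (recovering $\RR$) and $(i^*, j_a)$ (yielding the unique $\RR' \ne \RR$). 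The degenerate chessboard is untouched, so the rook for $z$ stays in row $r$, confirming $\RR' \in \bR$; one should also quickly observe that $R'_{i^*}$ remains rainbow because row $i^*$ was previously empty on chessboard $k$.

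For the sign claim, $\pi_\ell$ is unchanged for $\ell \ne k$, while $\pi_k$ changes only in its values at $j_a$ and $r$: in $\RR$ we have $\pi_k(j_a) = i_a$ and $\pi_k(r) = i^*$, whereas in $\RR'$ we have $\pi'_k(j_a) = i^*$ and $\pi'_k(r) = i_a$. Thus $\pi'_k = \pi_k \circ (j_a\ r)$, whose sign is $-\sgn \pi_k$, and multiplying over the $d+1$ chessboards gives $\csgn(\RR') = -\csgn(\RR)$.

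The one subtlety requiring care is keeping straight the ``phantom'' column index $r$ in the definition of $\pi_k$, so that the single-rook modification really corresponds to the transposition $(j_a\ r)$ in $S_r$ rather than some more complicated permutation. Beyond this piece of bookkeeping, the argument is elementary; the sufficiently general position hypothesis enters only implicitly, to guarantee that $F_\RR$ is a genuine $(N-1)$-simplex with $N$ distinct vertices, so that ridges are in bijection with the non-$z$ elements of the underlying $\RR$.
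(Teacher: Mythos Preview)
Your argument is correct and follows the same rook-placement approach as the paper: remove one rook from the $k$th chessboard, observe there are exactly two ways to reinsert a rook in the emptied column, and note that the resulting $\pi_k$'s differ by a single transposition. The only quibble is notational: you use $j_a$ both as the global index of $a=c_{j_a}$ and as the local column index in the $k$th chessboard (the domain of $\pi_k$ is $[r]$, not $[N+1]$), so you should introduce a separate symbol for the local column to make the transposition $(j_a\ r)$ well-typed.
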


\begin{proof} This is easy to see using the rook interpretation.
The simplex $F_\RR$ corresponds to a maximal placement of
rooks on the first $d+1$ chessboards, and $G$ is obtained by 
removing one of the rooks, say from the $k$th chessboard. 
Now the $k$th chessboard has one empty column and two empty rows,
so there are two possibilities of putting the rook back---one
corresponding to $F_\RR$, and the other to $F_{{\RR'}}$.

The permutation $\pi_k$ for $\RR$ and the one for ${\RR'}$
differ by a single transposition, and so $\csgn(\RR)=-\csgn({\RR'})$
as claimed.
\end{proof}

Next, we want to see that the degree of $\Sigma$
can be computed with respect to an arbitrary (generic) ray.
Let $\C$ be a BMZ-collection, exceptionally assumed
to be only in almost general position
(which, as we recall, means that all the ridges of
the $F_\RR$'s avoid the origin).

Let $\psi$ be a ray in $\RR^N$ emanating from $0$.
We call $\psi$ \emph{generic} for $\C$ if 
it does not intersect any ridge.
It follows that if such a generic $\psi$
intersects some $F_\RR$, then $F_\RR$ must be
an $(N-1)$-dimensional simplex
and $\psi$ intersects it in a single point lying in the relative
interior of~$F_\RR$. 

Clearly, almost all rays (in the sense of measure) are generic. Moreover,
if $\psi$ is generic for some $\C$, then it is also generic for
all $\C'$ sufficiently close to $\C$; this will be useful later on.

Given a generic ray $\psi$, we define $\deg_\psi(\C)$
in the same way as we defined $\deg(\C)$ using $\rho$;
that is, as $\sum_{\RR\in\bR:\psi\cap F_\RR\ne\emptyset}\sgn(\RR)$.

\begin{lemma} \label{l:moveray}
Let $\C$ be a BMZ-collection in sufficiently general position.
If $\psi$ and $\nu$ are generic rays for $\C$,
then $\deg_\psi(\C)=\deg_\nu(\C)$.
\end{lemma}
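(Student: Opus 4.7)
The plan is to connect $\psi$ and $\nu$ by a continuous family of rays $\psi_t$, $t\in[0,1]$, emanating from $0$, with $\psi_0=\psi$ and $\psi_1=\nu$, and to verify that $t\mapsto\deg_{\psi_t}(\C)$ is constant. A natural choice is to let $\psi_t$ be the ray along a smooth arc on the unit sphere $S^{N-1}$ joining the directions of $\psi$ and $\nu$. Each ridge $G$ is an $(N-2)$-dimensional simplex avoiding $0$ (by sufficiently general position), so its central projection from $0$ to $S^{N-1}$ has dimension at most $N-2$, i.e., codimension at least $1$. By a small generic perturbation of the arc, I may assume that it meets the projection of the ridge set at only finitely many critical times $0<t_1<\cdots<t_m<1$, and that at each such $t_i$ the ray $\psi_{t_i}$ passes through the relative interior of exactly one ridge $G_i$ at a single point $p_i$, transversally and without meeting any other ridge or any vertex of any $F_\RR$.

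On each open subinterval of $[0,1]\setminus\{t_1,\dots,t_m\}$ the function $t\mapsto\deg_{\psi_t}(\C)$ is constant: for each $\RR\in\bR$, the property ``$\psi_t$ meets the relative interior of $F_\RR$'' is open in $t$ as long as $\psi_t$ stays off the boundary of $F_\RR$, which is a union of ridges. So the whole question reduces to showing that $\deg_{\psi_t}(\C)$ is unchanged across each critical time $t_i$.

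At $t_i$, Lemma~\ref{l:pseudoman} identifies $G_i$ as a ridge of exactly two simplices $F_\RR,F_{\RR'}$, with $\csgn(\RR)=-\csgn(\RR')$. Since $\psi_{t_i}$ avoids the boundaries of all other simplices, only this pair can contribute to a jump at $t_i$. Locally near $p_i$, $F_\RR$ and $F_{\RR'}$ look like two $(N-1)$-dimensional half-planes joined along $G_i$, and as the ray sweeps across $p_i$ the pair $\bigl(\mathbf{1}[\psi_t\cap F_\RR\ne\emptyset],\mathbf{1}[\psi_t\cap F_{\RR'}\ne\emptyset]\bigr)$ undergoes exactly one of two transitions: either a \emph{swap} (one coordinate goes $1\to 0$ while the other goes $0\to 1$), or a \emph{joint flip} (both go $0\to 1$, or both go $1\to 0$). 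Which case occurs is determined by whether the distinguished vertices $v_\RR=\varphi_{i_1}(c_j)$ and $v_{\RR'}=\varphi_{i_2}(c_j)$---with $c_j$ the unique point whose color-class membership differs between $\RR$ and $\RR'$---lie on the same or opposite sides of the $(N-1)$-dimensional hyperplane $\aff(G_i\cup\{0\})$, which contains both $p_i$ and the direction of $\psi_{t_i}$.

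The main obstacle is the comparison of the geometric signs in the two cases. The matrices $M_\RR,M_{\RR'}$ defining $\gsgn(\RR),\gsgn(\RR')$ share $N-1$ rows (the vertices of $G_i$) and differ only in the row corresponding to $c_j$, where one has $\varphi_{i_1}(c_j)$ and the other $\varphi_{i_2}(c_j)$. A direct determinant calculation---expanding along the one differing row and using the dichotomy above to pin down the sign of the relevant cofactor---is expected to yield $\gsgn(\RR)=-\gsgn(\RR')$ in the swap case and $\gsgn(\RR)=\gsgn(\RR')$ in the joint-flip case. Combined with $\csgn(\RR)=-\csgn(\RR')$ this gives $\sgn(\RR)=\sgn(\RR')$ in the swap case (so the contribution transfers from $F_\RR$ to $F_{\RR'}$ with no net change) and $\sgn(\RR)=-\sgn(\RR')$ in the joint-flip case (so the two contributions cancel when both are present, matching the zero contribution when neither is). Either way $\deg_{\psi_t}(\C)$ is unchanged across $t_i$, and the lemma follows.
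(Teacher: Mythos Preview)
Your proof is correct and follows essentially the same approach as the paper's. Both arguments move the ray continuously, reduce to analyzing a single ridge crossing, invoke Lemma~\ref{l:pseudoman} for the pair $F_\RR,F_{\RR'}$ with opposite combinatorial signs, and split into two cases according to whether the distinguished vertices $v,v'$ lie on the same or opposite sides of the hyperplane $h=\aff(G\cup\{0\})$; your ``joint flip'' is the paper's ``same side'' case and your ``swap'' is its ``opposite sides'' case, with the identical sign conclusions. The only cosmetic difference is that you phrase the determinant comparison tentatively (``is expected to yield''), whereas the point is immediate: since $M_\RR$ and $M_{\RR'}$ share the $N-1$ rows coming from $G$ and differ only in the row for $c_j$, the determinant is linear in that row and vanishes exactly on $\operatorname{span}(G)=h$, so $\gsgn(\RR)=\gsgn(\RR')$ iff $v,v'$ lie on the same side of~$h$.
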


\begin{proof} We can continuously move $\psi$ to $\nu$ so that
it remains generic all the time, except for finitely many
moments where it intersects some ridge (or perhaps several ridges) at
an interior point. So it suffices to check that the degree cannot
change by crossing a ridge~$G$.

As we know from Lemma~\ref{l:pseudoman}, the ridge $G$
is shared by exactly two facets $F_\RR$ and $F_{\RR'}$,
with $\csgn(\RR)=-\csgn({\RR'})$.
Let $v$ be the vertex of $F_\RR$ not in $G$,
and similarly for $v'$ and $F_{\RR'}$.
As we saw in the proof of  Lemma~\ref{l:pseudoman},
$v$ and $v'$ are two different clones of the same point $c_j\in C$.

Let $h$ be the hyperplane spanned by $G$ and $0$. First let us
suppose that both $v$ and $v'$ are at the same side of~$h$ (Fig.~\ref{f:crossG}
left). Then the moving ray intersects both of $F_\RR,F_{\RR'}$
before crossing $G$ and none of them after the crossing, or the other
way around. 

\labpdffig{crossG}{The moving ray crossing a ridge. }

Let $M$ and $M'$ be the matrices used in the definition of the geometric
signs of $F_\RR$ and $F_{\RR'}$, respectively. They differ in a single
row, which is $v$ in $M$  and $v'$ in $M'$ (the row is in the same
position since $v$ and $v'$ are both clones of $c_j$). 
Since $v$ and $v'$ are on the same side of $h$, we have
$\sgn(\det M)=\sgn(\det M')$, and thus $F_\RR$ and $F_{\RR'}$
have the same geometric signs.

Altogether we get $\sgn(\RR)=-\sgn({\RR'})$, and thus the
when the ray intersects both of $F_\RR,F_{\RR'}$, their contributions
to the degree cancel out. By a similar argument, which we omit,
one gets that in the other case, as in  Fig.~\ref{f:crossG} right,
$\sgn(\RR)=\sgn({\RR'})$, and so in both case the degree
remains constant.
\end{proof}

Let $\RR\in\bR$ be a rainbow $r$-partition of a BMZ-collection
$\C$ (in sufficiently general position). 
For a permutation $\pi$ of $[r]$, let $\RR^\pi$ be the
rainbow $r$-partition obtained by permuting the classes of $\RR$
according to $\pi$ and moving $z$ back to the last class:
$$
\RR^\pi:=\left(R_{\pi(1)}\setminus \{z\},R_{\pi(2)}\setminus\{z\},\ldots,
R_{\pi(r-1)}\setminus\{z\}, R_{\pi(r)}\cup\{z\}\right).
$$
We need to understand how the combinatorial and geometric 
signs of $\R^\pi$ are related to those of $\RR$.

\begin{lemma}\label{l:permuting}
For $\RR$ and $\RR^\pi$ as above, we have
$$
\csgn({\RR^\pi})=\sgn(\pi)^{d+1}\csgn(\RR),
\ \ \ 
\gsgn({\RR^\pi})=\sgn(\pi)^{d+1}\gsgn(\RR).
$$
\end{lemma}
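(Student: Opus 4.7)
The plan is to treat the two equalities in parallel. In both cases the key observation is that passing from $\RR$ to $\RR^\pi$ relabels the row indices on each of the first $d+1$ chessboards by $\pi^{-1}$, and each such relabeling contributes a factor $\sgn(\pi)$.

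For the combinatorial sign I would fix $k\in[d+1]$ and chase definitions on the $k$-th chessboard. The $k'$-th entry of $\RR^\pi$ equals $R_{\pi(k')}$ (up to the position of $z$), so a point $c_j\in C_k$ originally lying in $R_i$ ends up in the $\pi^{-1}(i)$-th class of $\RR^\pi$. Hence the chessboard-permutation $\pi_k^{\RR^\pi}$ equals $\pi^{-1}\circ\pi_k^\RR$ on the columns $j\le r-1$, and a direct check (using that the empty row $\pi_k(r)$ relabels to $\pi^{-1}(\pi_k(r))$) confirms it on $j=r$ as well. Multiplicativity of $\sgn$ then gives $\sgn(\pi_k^{\RR^\pi})=\sgn(\pi)\,\sgn(\pi_k^\RR)$, and taking the product over $k=1,\dots,d+1$ produces the desired factor $\sgn(\pi)^{d+1}$.

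For the geometric sign, the same bookkeeping shows that the row of the matrix $M^\pi$ (defining $\gsgn(\RR^\pi)$) corresponding to a point $c_j$ with $c_j\in R_i$ is $c_j^+\otimes w_{\pi^{-1}(i)}$, whereas in $M$ the same row is $c_j^+\otimes w_i$. Because $w_1,\dots,w_r$ are the vertices of a regular $(r-1)$-simplex centered at $0$, the assignment $w_i\mapsto w_{\pi^{-1}(i)}$ extends to a unique orthogonal transformation $U$ of $\R^{r-1}$, so the replacement $c_j^+\otimes w_i\mapsto c_j^+\otimes (Uw_i)$ is realized simultaneously on all rows by right-multiplying $M$ with $(I\otimes U)^\top$. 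Using the standard identity $\det(A\otimes B)=(\det A)^n(\det B)^m$ for $A\in\R^{m\times m}$ and $B\in\R^{n\times n}$, I obtain $\det M^\pi=(\det U)^{d+1}\det M$.

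The one remaining ingredient is $\det U=\sgn(\pi)$. This follows because the permutation action of $S_r$ on $\{w_1,\dots,w_r\}$ realizes the standard $(r-1)$-dimensional representation of $S_r$, i.e., the complement of the trivial line in the permutation representation on $\R^r$: the latter has determinant $\sgn(\pi)$ while the trivial line contributes $1$, so the standard representation has determinant $\sgn(\pi)$ (alternatively, verify this on transpositions, which act as reflections on $\R^{r-1}$). Combining gives $\gsgn(\RR^\pi)=\sgn(\pi)^{d+1}\gsgn(\RR)$, as claimed. The main technical nuisance I anticipate is merely keeping the direction $\pi$ vs.\ $\pi^{-1}$ straight, but since $\sgn(\pi)=\sgn(\pi^{-1})$ this only affects intermediate formulae and not the final statement.
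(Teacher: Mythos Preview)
Your argument is correct and follows essentially the same route as the paper's proof: both reduce the combinatorial claim to the observation that passing from $\RR$ to $\RR^\pi$ relabels the rows of each of the $d+1$ chessboards by $\pi^{-1}$, and both handle the geometric sign by realizing the change $w_i\mapsto w_{\pi^{-1}(i)}$ via a linear map of $\R^{r-1}$ and exploiting the block (Kronecker) structure on $\R^{(d+1)(r-1)}$. The only cosmetic difference is that the paper first reduces to transpositions and then uses that the corresponding map is a mirror reflection, whereas you treat a general $\pi$ in one shot via the standard representation (with the transposition check offered as an alternative); both yield $\det U=\sgn(\pi)$ and hence the factor $\sgn(\pi)^{d+1}$.
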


\begin{proof}
In the representation of $\RR$ with rooks, 
passing to $\RR^\pi$ means that we permute the rows of
the first $d+1$ chessboards. From this we immediately
get the first relation, $\csgn({\RR^\pi})=\sgn(\pi)^{d+1} \csgn({\RR})$.

For the geometric sign, it suffices to consider 
the case where $\pi$ is a transposition exchanging two indices
$i,j$ and show that the geometric sign changes by the factor
of $(-1)^{d+1}$ (an arbitrary $\pi$ can be expressed as a composition of such
transpositions).

The effect of such a transposition on the vertex set of $F_\RR$
is that the $i$th clones of the points of $R_i$ are replaced
with the $j$th clones, and the reverse happens for the points
of $R_j$ (ignoring $z$). 

Let $M$ be the matrix as in the definition of $\gsgn(\RR)$,
and let $M^\pi$ be the one for $\gsgn(\RR^\pi)$. Thus, a row
of the form $x^+\otimes w_i$ in $M$ is replaced by $x^+\otimes w_j$
in $M^\pi$. Similarly, $x^+\otimes w_j$ is replaced by
$x^+\otimes w_i$, and all other rows remain unchanged.

Now we use the choice of the vectors $w_1,\ldots,w_r$.
They form the vertex set
of a regular simplex, and so there is a linear map 
$f\:\R^{r-1}\to \R^{r-1}$ that interchanges $w_i$ with
$w_j$ and leaves all the other $w_k$ fixed (namely, $f$ is a suitable
mirror reflection). 

Let $A$ be the matrix of $f$ with respect to the standard basis of $\R^{r-1}$.
Then we can write $M^\pi = BM$, where
$B$ is the block-diagonal matrix with $d+1$ blocks $A$ on the diagonal.
Thus, $\det(M^\pi)=\det(A)^{d+1}\det(M)$, and since
$f$ is a mirror reflection, and thus orientation-reversing,
we have $\sgn(\det A)=-1$.
So the geometric sign changes by $(-1)^{d+1}$ as claimed.
\end{proof}

\begin{proof}[Proof of Proposition~\ref{p:deginvar}. ]
The main trick in the proof is to alternate moving the
ray and the points, thereby avoiding ``too degenerate'' situations.

Using Lemma~\ref{l:motion-gen}, we may assume that
$\C$ and $\C'$ are connected by a continuous family
$\C^{(t)}$.  Each $\C^{(t)}$ is in almost general
position, and it is in sufficiently general position 
except for finitely many critical times.

For every  $t\in [0,1]$, including critical ones,
we can choose a generic ray for $\C^{(t)}$,
which also remains generic for all $\C^{(t')}$ with $t'$ in some
open interval around $t$.
By compactness,
the interval $[0,1]$ can be covered by finitely many of these open
intervals $I_1,\ldots,I_m$, each of them corresponding to some generic ray
$\psi_i$. 

By Lemma~\ref{l:moveray}, on the overlapping part $I_i\cap I_j$
we can ``measure'' the degree using either $\psi_i$ or $\psi_j$
with the same result. Therefore, it suffices to show that
if $I\subseteq [0,1]$ is an interval such that $\psi$
is a generic ray for all $\C^{(t)}$ with $t\in I$, then 
$\deg_\psi(\C^{(t)})$ may change only by multiples of $r!$.

The degree may change only at critical values of $t$; let $t_0\in I$
be one of the critical values. Let us  see how the 
contribution of some $F_{\RR}$ to $\deg_\psi(\C^{(t)})$ may change at~$t_0$.
(More formally, we should write $F_{\RR^{(t)}}$ instead of $F_\RR$,
where $\RR^{(t)}$ is a rainbow $r$-partition of $\C^{(t)}$
whose combinatorial type does not depend on $t$. But we 
drop the superscript, keeping the dependence on $t$ implicit.)

A necessary condition for the change is that $F_{\RR}$ intersects $\psi$
just before or just after $t_0$. If it intersects $\psi$ \emph{both}
just before and just after $t_0$, then, using the genericity of $\psi$,
one can see that the geometric sign of $F_{\RR}$ does not
change, and so its contribution to the degree does not change either.
Thus, the only possibility is that $F_{\RR}$ intersects $\psi$
just before $t_0$ and does not intersect it just after, or the other way
round. 

By symmetry, it suffices to consider only the first case.
Let us also assume that $\sgn({\RR})=+1$ for $t<t_0$
(in some small open interval ending in $t_0$, that is).
Then, since $F_{\RR}$ stopped intersecting $\psi$ at $t_0$,
it must have passed $0$, and therefore, its geometric sign changed.
Thus, $\sgn({\RR})=-1$ for $t>t_0$, and the contribution
of $F_\RR$ to $\deg(\C^{(t)})$ has decreased by $1$ at~$t_0$.

Now we consider a permutation $\pi$ of $[r]$ and the
rainbow $r$-partition $\RR^\pi$, again depending on $t$.
By Lemma~\ref{l:permuting}, we have $\sgn({\RR^\pi})=
\sgn(\RR)$ all the time, so
$\sgn({\RR^\pi})$ also changes from $+1$ to $-1$ at $t_0$.
 Since the geometric sign of $F_{\RR^\pi}$ changes at $t_0$
(again by Lemma~\ref{l:permuting}), it means that $F_{\RR^\pi}$
passed through $0$ at $t_0$. So either it intersected $\psi$
just before $t_0$ and it does not intersect it just after,
or vice versa. In both cases, the contribution of $F_{\RR^\pi}$
to $\deg(\C^{(t)})$ has also decreased by $1$ at~$t_0$.

Since there are $r!$ choices for $\pi$, it follows that
the degree may change only by multiples of $r!$ as claimed.
The proposition is proved.
\end{proof}

\section{Computing the degree of a special BMZ-collection}

Here is the last step in the proof of Theorem~\ref{t:main}.

\begin{lemma}
\label{l:specific}
There is a  BMZ-collection $\C_0$ in sufficiently general position 
such that  $$|\deg \C_0|=((r-1)!)^{d+1}.$$
\end{lemma}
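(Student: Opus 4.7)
I would realize $\C_0$ as a small generic perturbation of a ``tight cluster'' configuration. Put $z=0$, take affinely independent $v_1,\ldots,v_{d+1}\in\R^d$ such that $0$ lies in the relative interior of the simplex $\conv(v_1,\ldots,v_{d+1})$ (for instance, the vertices of a regular $d$-simplex centered at the origin), and for each $k\in[d+1]$ let $C_k$ consist of $r-1$ points $c_{k,1},\ldots,c_{k,r-1}$ inside a small ball of radius $\eps$ around $v_k$, with the perturbations chosen generically enough to place $\C_0$ in sufficiently general position; the existence of such a perturbation follows by the argument of Lemma~\ref{l:sgp}.

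For $\eps$ small enough, the Tverberg rainbow $r$-partitions in $\bR(\C_0)$ are exactly the $((r-1)!)^{d+1}$ ``diagonal'' partitions: those in which on each of the first $d+1$ chessboards the empty row is row $r$; equivalently, $R_r=\{z\}$ and each $R_i$ with $i<r$ contains one point from every color class. To prove this I would take the limit $\eps\to 0$: since $0$ has strictly positive barycentric coordinates in $\conv(v_1,\ldots,v_{d+1})$, the convex hull of any proper subset of $\{v_1,\ldots,v_{d+1}\}$ avoids $0$, so in the limit $0\in\conv(R_i)$ for $i<r$ forces $R_i$ to contain a representative of every color. Diagonal partitions are then visibly Tverberg (with Tverberg point $0$), and for every non-diagonal $\RR$ some $R_i$ with $i<r$ misses a color; a short argument based on affine independence of the $v_k$ shows that $\bigcap_i \conv(R_i)=\emptyset$ already in the limit, hence also for all sufficiently small $\eps>0$.

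For each diagonal $\RR$ I then compute $\sgn(\RR)$ as follows. Since each $\pi_k$ fixes $r$ and permutes $[r-1]$, we have $\csgn(\RR)=\prod_{k=1}^{d+1}\sgn(\pi_k|_{[r-1]})$. For the geometric sign, the $(k,j)$-th row of the matrix $M$ associated with $F_\RR$ is $c_{k,j}^+\otimes w_{\pi_k(j)}$, so permuting the $r-1$ rows within block $k$ by $\pi_k|_{[r-1]}^{-1}$ (for each $k$) introduces the sign $\csgn(\RR)$ and yields a matrix whose rows are $c_{k,j}^+\otimes w_j$. In the limit $\eps\to 0$ the latter converges to the Kronecker product $V\otimes W'$, where $V$ is the $(d+1)\times(d+1)$ matrix with rows $v_k^+$ and $W'$ the $(r-1)\times(r-1)$ matrix with rows $w_1,\ldots,w_{r-1}$. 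Using $\det(V\otimes W')=(\det V)^{r-1}(\det W')^{d+1}$ and the fact that both factors are nonzero (by affine independence of the $v_k$ and linear independence of any $r-1$ of the $w_i$), one gets $\gsgn(\RR)=\csgn(\RR)\cdot\sgn\bigl((\det V)^{r-1}(\det W')^{d+1}\bigr)$; the two $\csgn(\RR)$ factors then cancel in $\sgn(\RR)=\gsgn(\RR)\csgn(\RR)$, giving $\sgn(\RR)=\sgn\bigl((\det V)^{r-1}(\det W')^{d+1}\bigr)=\pm 1$, independently of $\RR$. Summing over the $((r-1)!)^{d+1}$ diagonal partitions yields $|\deg\C_0|=((r-1)!)^{d+1}$.

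The main obstacle will be the enumeration of Tverberg partitions: one must rigorously rule out, for every non-diagonal combinatorial type, the existence of a common point of the associated convex hulls. The key observation is that in the limit $\eps=0$ the intersection is already \emph{strictly} empty, a property following from affine independence of the $v_k$ together with the strict positivity of the barycentric coordinates of $0$; openness of ``disjointness of compact convex sets'' then propagates this to all small $\eps>0$. A secondary routine technicality is to certify sufficient general position for every $\RR\in\bR$ (not only the diagonal ones), which follows by the generic perturbation scheme already used in Lemma~\ref{l:sgp}.
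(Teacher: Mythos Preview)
Your construction is the same as the paper's, and your enumeration of the Tverberg partitions (via the $\eps\to 0$ limit and openness of empty intersection for compact sets) makes explicit what the paper dismisses as ``easy to see (and well known).'' The genuine difference is in showing that all diagonal $\RR$ carry the same sign. The paper argues locally: it connects any two diagonal partitions by a chain of adjacent row-swaps on a single chessboard, observes that each swap flips $\csgn$, and then shows each swap also flips $\gsgn$ by a continuous-motion argument within the clusters (invoking Lemma~\ref{l:dictio} to certify that the relevant determinant never vanishes during the motion). Your route is global and more explicit: you permute the rows of $M$ within each color block to strip off $\csgn(\RR)$, then identify the limiting matrix as a Kronecker product $V\otimes W'$ with nonzero determinant, so that $\sgn(\RR)=\sgn\det(V\otimes W')$ independently of~$\RR$. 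This is a clean shortcut that avoids the auxiliary motion argument and even pins down the sign of $\deg(\C_0)$ in terms of $\det V$ and $\det W'$.

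One small slip to fix: after the block-wise row permutation the rows are $c_{k,\pi_k^{-1}(j)}^+\otimes w_j$, not $c_{k,j}^+\otimes w_j$ as you wrote. This does not affect your argument, since in the limit $\eps\to 0$ every $c_{k,\ell}$ collapses to $v_k$ regardless of~$\ell$, so the limiting matrix is still $V\otimes W'$; but the statement as written is inaccurate for $\eps>0$.
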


\begin{proof} The first $d+1$ color classes of $\C_0$ are
small clusters around the vertices of a regular $d$-dimensional 
simplex, as in Fig.~\ref{f:clusters}, and the single point $z$
of the last class is placed to the center of gravity of that simplex.

It is easy to see (and well known) that the Tverberg rainbow $r$-partitions
$\RR$ of $\C_0$ with $\RR\in\bR$ have $R_r=\{z\}$,
and the other $R_i$ each use
exactly one point of each $C_j$, $j=1,2,\ldots,d+1$. 
In the rook interpretation, they correspond to rook placements
where the $r$th row contains only the single rook in the last column,
and from this one immediately gets that their number is $((r-1)!)^{d+1}$.

It remains to see that all of these Tverberg $\RR$'s have the same sign.
It suffices to consider the effect of a local change, where we
swap two adjacent rows in one of the first $d+1$ chessboards (which
corresponds to moving some $c_j\in C_k$ from $R_i$ to $R_{i+1}$
and some $c_{j'}\in C_k$ from $R_{i+1}$ to $R_i$, $i+1\le r-1$).
This obviously changes the combinatorial sign. 

It remains to show that the geometric sign is also
changed by the swap. Let $\RR$
be the Tverberg $r$-partition before the swap and
$\RR^\leftrightarrow$ the one after the swap,
and let $M$  and $M^\leftrightarrow$ be the corresponding matrices
for $F_\RR$ and $F_{R^\leftrightarrow}$, as in the definition
of the geometric sign.
 Thus, the $j$th row
is $\varphi_i(c_j)$ in $M$ and   $\varphi_{i+1}(c_j)$ in $M^\leftrightarrow$, 
and the $j'$th row is $\varphi_{i+1}(c_{j'})$ in $M$ 
and $\varphi_{i}(c_{j'})$ in $M^\leftrightarrow$. 

Let $M'$ denote the matrix obtained from $M^\leftrightarrow$ 
by interchanging the
$j$th row with the $j'$th row. We have $\det(M')=-\det(M^\leftrightarrow)$,
and we want to check that $\sgn\det(M')=\sgn\det (M)$.

We can regard $M'$ as the matrix of vertex coordinates for the
$(N-1)$-dimensional simplex $F_\RR$ for a \emph{different}
BMZ-collection $\C_0'$, namely, the one obtained from $\C_0$
by interchanging $c_j$ with $c_{j'}$.  We prove a more general
statement:
whenever $\C_0'$ is a BMZ-collection obtained from $\C_0$ by moving
each of the points $c_j$ within its cluster arbitrarily
(and keeping $z$ fixed), then $\sgn\det(M')=\sgn\det(M)$.

It suffices to check that during a continuous
motion of some $c_j$ within its cluster, $\sgn \det(M)$ remains
constant. This sign may change only  when the
simplex $F_\RR$ becomes degenerate, or when the hyperplane
spanned by $F_\RR$ passes through $0$.

These two conditions translate, according to Lemma~\ref{l:dictio}, 
to the following:
during the continuous motion, the points of each class $R_i$, $i<r$, remain
affinely independent, and the $r$-partition $\RR-z$ 
never has either an affine Tverberg point or a Tverberg direction. 
The former holds because each $R_i$ has one point in each cluster. 
The latter holds trivially since the $r$th class of $\RR-z$ is empty.
This concludes the proof of Lemma~\ref{l:specific}.
\end{proof}

Now we have completed all steps from the proof outline, and thus
Theorem~\ref{t:main} is proved.

\section{Conclusion}\label{s:concl}

\heading{Configurations with degree 0.}
Suppose that there is an integer $r$ for which exists a BMZ-collection without
a Tverberg point. Then the degree of this collection has to be $0$, and thus
$r$ cannot be a prime number.

We performed computational experiments in the case $r = 4$,
with $d=2,3$.
We generated BMZ-collections at random inside the unit square
(or cube). We 
frequently obtained collections with degree $0$; however,
all of them had a Tverberg point. See Fig.~\ref{f:degzero42} for a
configuration with degree zero and few Tverberg partitions. 
We also obtained a collection of degree $0$
for $r=6$ and $d=2$. In this case the computation was already quite time
consuming (with our algorithm), and thus we performed only
a small number of experiments.
 
We believe that  BMZ-collections of degree $0$ exist for all non-prime $r$
and in all dimensions, but  unfortunately, we do not have a proof for this.

A natural idea for such a proof is to start with two  BMZ-collections
$\C_1$ and $\C_2$,
one of a positive degree and one of a negative degree,
and then transform $\C_1$ to $\C_2$ by a (generic) continuous motion.
\emph{If} we knew that the degree may jump only by $\pm r!$
during the motion, we would reach degree $0$ at some moment
(since the degree is always congruent to $((r-1)!)^d$
modulo $r!$, as we know, and $((r-1)!)^d$ is divisible by $r!$
for $d\ge 2$ and non-prime~$r$).
However, it turns out that even during a generic motion, there may be jumps
by larger multiples of $r!$, and so a subtler argument
is needed.

\labpdffig{degzero42}{A BMZ-collection with $r=4$ and $d=2$ of degree zero with
only two different Tverberg partitions (more precisely with only two
equivalence classes of $\sim$).}

\heading{A direct definition of sign?} A natural question is, whether
one can define the sign 
of a rainbow partition directly, without going through
the Sarkaria--Onn transform. However, it seems that if there
is such a direct definition (only referring to the mutual
position of the points of the rainbow partition) at all,
it has to be rather complicated. We will illustrate
this with an example concerning the simplest nontrivial
case, with  $d = 2$ and $r = 3$. 

Thus, we consider points $c_1,c_2,\ldots,c_6,z$ in the plane,
and the following BMZ-collection:
$C_1 = \{c_1, c_2\}$, $C_2 = \{c_3,
c_4\}$,$C_3 = \{c_5, c_6\}$, $C_4 = \{z\}$. 
We consider several rainbow partitions
$\RR \in \bR$ and the dependence of $\sgn(\RR)$ on the
positions of the~$c_i$. From the definition of the sign we get that $\sgn(\RR)
= 0$ iff at least on of the conditions of Lemma~\ref{l:dictio} holds. 
Hence it is plausible to assume that the sign changes when the
BMZ-collection moves over a position where $\RR - z$ has an affine Tverberg
point, or if one of the partition sets of $\RR - z$ is affinely dependent,
 or, finally, if $\RR - z$
has a Tverberg direction.
However, the movement must be sufficiently generic, otherwise the collection
could only ``reflect'' and the sign would not change. We did not attempt to
describe such a generic movement precisely since we are not aware of
convincing consequences\footnote{If there were a direct definition of sign using
this property then it would be surely of our interest.} (except for the discussion in this section).


First we set $R_1 := \{c_1, c_3, c_5\}$, $R_2 := \{c_2, c_4, c_6\}$, and
$R_3 := \{z\}$.
In this case $\sgn \RR = 0$ iff at least one of the triangles $c_1c_3c_5$ or
$c_2c_4c_6$ is degenerate. Thus a reasonable guess is that the sign depends
only on the cyclic orientations of the triangles $c_1c_3c_5$ and
$c_2c_4c_6$.\footnote{As we pointed out above, we do not have a precise proof.
However, our observation is also supported by a computer program for computing
the sign on many examples. A similar remark also applies 
for other choices of~$\RR$.}

For $R_1 := \{c_1, c_3, c_6\}$, $R_2 := \{c_2, c_4\}$, $R_3 := \{c_5,
z\}$, the situation is similar. The sign depends only on the cyclic
orientation of the triangles $c_1c_3c_6$ and $c_2c_4c_5$.

Finally, let  $R_1 := \{c_2, c_5\}$, $R_2 := \{c_4, c_6\}$, $R_3 
:= \{c_1, c_3, z\}$.
Then the sign depends on the orientation of the lines $c_2c_5$, $c_4c_6$ and
$c_1c_3$. However, it also depends on the mutual position of these lines,
and it changes when all three of them pass through a common point. See
Figure~\ref{f:moving}.

Unfortunately, we are not aware of a simple uniform description of the three
cases above.

\labpdffig{moving}{The degree of this partition changes when the three lines
pass through a common point.}

\subsection*{Acknowledgement}

We would like to thank Marek Kr\v{c}\'al for useful discussions
at initial stages of this research. We also thank G\"unter M.~Ziegler for valuable comments,
and Peter Landweber and two anonymous referees for detailed comments and corrections
that greatly helped to improve the presentation. In particular, we are indebted to one of the 
referees for pointing out to us the reference \cite{Zivaljevic-InPursuitOfTheColoredCaratheodoryBaranyTheorems-1995}.

\bibliographystyle{alpha}
\bibliography{comb-ct}

\end{document}